\documentclass{article}
\usepackage[paper=a4paper,left=30mm,right=30mm,top=25mm,bottom=25mm]{geometry}
\usepackage[T1]{fontenc}
\usepackage{lmodern}
\usepackage{amsmath,amsthm,amssymb,mathtools}
\usepackage{graphicx,booktabs,enumerate}
\newcommand{\rhogammaroot}{}
\IfFileExists{rho_gamma_article/rho-gamma.tex}{%
\renewcommand{\rhogammaroot}{rho_gamma_article/}}{}
\IfFileExists{dsfont.sty}{\usepackage{dsfont}}{}

\usepackage[numbers,sort&compress]{natbib}
\usepackage{xcolor}
\definecolor{linkblue}{rgb}{0.10,0.25,0.45}
\usepackage[colorlinks=true,allcolors=linkblue]{hyperref}
\hypersetup{
	pdftitle={The exact region determined by Spearman's rho and Gini's gamma},
	pdfauthor={Jonathan Ansari, Marcus Rockel and Stefanie Steinma\ss{}l},
	pdfkeywords={Spearman's rho, Gini's gamma, attainable region, copula, Kantorovich duality, optimal transport}}

\definecolor{light-gray}{gray}{0.95}
\definecolor{darkblue}{rgb}{0,0,.5}
\definecolor{foxred}{rgb}{0.7, 0.11, 0.11}

\allowdisplaybreaks

\newtheorem{theorem}{Theorem}[section]
\newtheorem{proposition}[theorem]{Proposition}
\newtheorem{lemma}[theorem]{Lemma}
\newtheorem{corollary}[theorem]{Corollary}
\theoremstyle{definition}
\newtheorem{definition}[theorem]{Definition}
\newtheorem{example}[theorem]{Example}
\theoremstyle{remark}
\newtheorem{remark}[theorem]{Remark}

\newcommand{\de}{\,\mathrm{d}}
\newcommand{\E}{\mathbb{E}}
\newcommand{\PP}{\mathbb{P}}
\newcommand{\CC}{\mathcal{C}}
\newcommand{\couplings}{\mathcal{P}_\lambda}
\newcommand{\I}{[0,1]}
\newcommand{\Om}{\Omega_{\rho,\gamma}}
\newcommand{\rhoup}{\overline{\rho}}
\DeclareMathOperator{\sgn}{sgn}
\DeclareMathOperator{\Unif}{Unif}
\providecommand{\keywords}[1]{\noindent\textbf{Keywords } #1}

\title{\textbf{The exact region determined by Spearman's rho and Gini's gamma}}
\author{%
	Jonathan Ansari\thanks{Department of Mathematics, Paris Lodron Universit\"at Salzburg,
	Hellbrunner Stra\ss e 34, 5020 Salzburg, Austria.
	\texttt{jonathan.ansari@plus.ac.at} (Jonathan Ansari),
	\texttt{stefanie.steinmassl@plus.ac.at} (Stefanie Steinma\ss{}l)}
	\and
	Marcus Rockel\thanks{Department of Quantitative Finance, Institute for Economics,
	University of Freiburg, Rempartstr.\;16, 79098 Freiburg, Germany.
	\texttt{marcus.rockel@finance.uni-freiburg.de}}
	\and
	Stefanie Steinma\ss{}l\footnotemark[1]%
}
\date{\today}

\begin{document}
\maketitle

\begin{abstract}
We determine the exact attainable region of Spearman's rho and Gini's gamma over all bivariate copulas, resolving the remaining pairwise exact-region problem among Spearman's rho, Kendall's tau, Gini's gamma, Blomqvist's beta and Spearman's footrule. 
We give an explicit parametrization of the rho-maximal boundary for prescribed Gini's gamma and construct copulas attaining every boundary point. The boundary consists of an elementary arc up to a single junction and, beyond it, countably many algebraic pieces accumulating at comonotonicity. This description also gives the largest possible value of \(|\rho-\gamma|\) and the sharp thresholds beyond which rho and gamma must have the same sign. Our proof separates the signs and magnitudes of centred ranks, reducing the optimization of linear combinations of \(\rho\) and \(\gamma\) to an optimal transport problem for two uniform magnitudes. The extremizers combine a central antidiagonal block with rescaled auxiliary transport optimizers, and global optimality follows from a Kantorovich dual potential obtained by gluing the corresponding dual pieces.
\end{abstract}

\keywords{Attainable region, copula, Gini's gamma, Kantorovich duality, optimal transport, Spearman's rho}

\section{Introduction}\label{sec:intro}

Classical rank correlations quantify the degree of positive or negative dependence between two random variables. Their possible values are linked, although neither
their magnitudes nor their signs must coincide. To understand their interplay, it is important to determine their exact region, which provides all pairs of values that can be attained simultaneously. 
In this paper, we determine the exact region
\begin{equation}\label{eq:region-def}
    \Om\coloneqq\{(\rho(C),\gamma(C)):C\in\CC\}.
\end{equation}
between Spearman's rho and Gini's gamma. Our main result, Theorem \ref{thm:main} resolves the remaining problem of the pairwise comparison of classical rank correlations including these two coefficients as well as Kendall's tau, Blomqvist's beta,
and Spearman's footrule; see Table \ref{tab:literature} for an overview.

Let $\CC$ denote the class of bivariate copulas. By continuity and convexity arguments, the problem \eqref{eq:region-def} reduces to finding the
greatest Spearman's rho compatible with any fixed value of Gini's
gamma, i.e., to solve the maximization problem 
\begin{align}\label{def_optim_problem}
    \rhoup(g)\coloneqq\max\{\rho(C):C\in\CC,\ \gamma(C)=g\},
    \qquad -1\le g\le1.
\end{align}
Recall that for a bivariate copula \(C\), Spearman's rho and \(\gamma\) Gini's gamma are defined by
\begin{equation}\label{eq:intro-rank-moments}
    \rho(C)=\operatorname{Cor}(U,V)=12\,\E[UV]-3,
    \qquad
    \gamma(C)=2\,\E\bigl[|U+V-1|-|U-V|\bigr], \qquad (U,V)\sim C.
\end{equation}
Thus rho is the product-moment correlation between the uniform ranks, while
gamma contrasts their absolute deviations from the antidiagonal and the
diagonal. Written directly in terms of the copula, these quantities admit the representation
\begin{equation}\label{eq:defs}
\begin{aligned}
    \rho(C)&=12\int_{\I^2}C(u,v)\de u\de v-3, \quad
    \gamma(C)=4\int_0^1\bigl\{C(u,u)+C(u,1-u)\bigr\}\de u-2,
\end{aligned}
\end{equation}
see \citet[Section~5.1]{nelsen2006introduction} and
\citet[Section~2]{bukovsek2021spearman}.
The moment identities are recalled in Lemma~\ref{lem:representations}.
For the concordance interpretation of Gini's gamma, see
\citet{nelsen1998concordance}. The review by
\citet{genest2010spearman} provides further background on Gini's gamma and the related
Spearman's footrule, which we will use lateron.

Both Spearman's rho and Gini's gamma are measures of concordance in the sense of
\citet{scarsini1984measures}; see also
\citet[Section~2.4]{durante2015principles}. They take values in $[-1,1]$
and change sign under reflection of one coordinate. Further, write
$M(u,v)=\min(u,v)$ and $W(u,v)=\max(u+v-1,0)$ for the
Fr\'echet--Hoeffding copulas, and $\Pi(u,v)=uv$ for the independence copula.
Then, for these copulas, the coefficients pairs attain the values $(1,1)$, $(-1,-1)$ and $(0,0)$, respectively.

Of particular relevance for the present paper is the recent
solution of the rho--footrule problem by \citet{ansari2026exact}. Its optimal transport formulation and the corresponding dual potential provide a key ingredient in our analysis of the rho--gamma region. We will therefore adopt part of the notation introduced there when developing the boundary parametrization below.

\begin{table}[t]
\centering
\small
\caption{Representative exact attainable regions for classical concordance
coefficients and the principal methods used to derive them.}
\label{tab:literature}
\begin{tabular}{@{}p{0.18\linewidth}p{0.21\linewidth}p{0.54\linewidth}@{}}
\toprule
Coefficients & Reference & Main idea\\
\midrule
$(\tau,\rho)$
& \citep{schreyer2017exact}
& Combinatorial estimates and extremal shuffles of $M$.\\[2mm]

$(\beta,\phi)$, $(\beta,\gamma)$
& \citep{bukovsek2021spearman}
& Explicit attaining copulas combined with copula bounds.\\[2mm]

$(\phi,\gamma)$
& \citep{bukovsek2022exact}
& Explicit extremizers and sharp integral inequalities.\\[2mm]

$(\tau,\phi)$, $(\tau,\gamma)$
& \citep{bukovsek2023exact}
& Shuffles, approximation and integral inequalities.\\[2mm]

$(\rho,\phi)$
& \citep{ansari2026exact}
& An optimal-transport formulation with an explicit dual potential.\\[2mm]

$(\rho,\gamma)$
& This paper
& Sign--magnitude decomposition, optimal transport and a dual potential.\\
\bottomrule
\end{tabular}
\end{table}

We parametrize the upper boundary \(\overline{\rho}\) in \eqref{def_optim_problem} by a parameter $\theta\in[0,\infty]$,
with $C_0=M$ and $C_\infty=W$. The corresponding boundary point is written
as
$$
    \bigl(G(\theta),P(\theta)\bigr)
    =
    \bigl(\gamma(C_\theta),\rho(C_\theta)\bigr),
$$
where the family of copulas $C_\theta$ is introduced in
Definition~\ref{def:extremizers}. The explicit formulas for $G$ and $P$ are
given in \eqref{eq:param-boundary}; the auxiliary quantities entering these
formulas are defined in
\eqref{eq:large-parameters}--\eqref{eq:moments} and \eqref{eq:split}.

\begin{theorem}[The exact rho--gamma region]\label{thm:main}
The mapping $\theta\mapsto(G(\theta),P(\theta))$ is continuous on
$[0,\infty]$, while $G$ maps $[0,\infty]$ onto $[-1,1]$. For each
$\theta\in[0,\infty]$,
\begin{equation}\label{eq:maximum}
    P(\theta)
    =
    \max\{\rho(C):C\in\CC,\ \gamma(C)=G(\theta)\},
\end{equation}
and the copula $C_\theta$ from Definition~\ref{def:extremizers} realizes this
maximum. Consequently, $\rhoup(G(\theta))=P(\theta)$, and
\begin{equation}\label{eq:exact-region}
    \Om
    =
    \bigl\{(r,g):-1\le g\le1,\
    -\rhoup(-g)\le r\le\rhoup(g)\bigr\}.
\end{equation}
Moreover, the region is compact, convex and centrally symmetric, and
$\rhoup$ is continuous, concave and strictly increasing with
$\rhoup(\pm1)=\pm1$.
\end{theorem}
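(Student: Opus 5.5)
We follow the strategy announced in the abstract: characterize the upper boundary through supporting lines, and certify optimality of the family $C_\theta$ by Kantorovich duality.

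\textbf{Step 1: reduction to linear functionals.} $\CC$ is convex and compact in the uniform topology, and both $\rho$ and $\gamma$ are affine and continuous on $\CC$ by \eqref{eq:defs}. Hence $\Om$ is a compact convex set. The reflection $C\mapsto C^{\sigma}$, $C^\sigma(u,v)=u-C(u,1-v)$, maps $(\rho,\gamma)\mapsto(-\rho,-\gamma)$, so $\Om$ is centrally symmetric. Then $\rhoup$ is concave and upper semicontinuous on $[-1,1]$, and the lower boundary is $-\rhoup(-g)$, which gives \eqref{eq:exact-region}.

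Continuity of $\rhoup$ at the endpoints follows from $\rhoup(\pm1)=\pm1$. Indeed, $\gamma(C)=\pm1$ forces $C=M$ or $C=W$, since $\gamma=1$ requires $\E|U-V|=0$. Strict monotonicity then follows from concavity together with the fact that $\rhoup(1)=1$ is the global maximum, attained only at $g=1$. So it suffices to show that for every $\theta$ the copula $C_\theta$ maximizes some linear functional $\rho(C)-\lambda_\theta\gamma(C)$ with $\lambda_\theta\ge 0$, or the limiting functional at the ends. By concavity, a maximizer of such a functional lies on the graph of $\rhoup$, which yields \eqref{eq:maximum} and $\rhoup(G(\theta))=P(\theta)$. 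Surjectivity of $G$ onto $[-1,1]$ follows from continuity of $G$ on $[0,\infty]$ (read off from the explicit formulas \eqref{eq:param-boundary}) together with $G(0)=1$ and $G(\infty)=-1$.

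\textbf{Step 2: sign--magnitude decomposition.} Write $X=2U-1$ and $Y=2V-1$, and set $X=S|X|$, $Y=T|Y|$. Then $\E[UV]$ is affine in $\E[XY]=\E[ST|X||Y|]$, and
\[
|U+V-1|-|U-V|=\tfrac12\bigl(|X+Y|-|X-Y|\bigr)=ST\min(|X|,|Y|).
\]
So $\rho-\lambda\gamma$ equals an affine function of $\E\bigl[ST\,(3|X||Y|-\lambda'\min(|X|,|Y|))\bigr]$ for a suitable $\lambda'$ proportional to $\lambda$.

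The magnitudes $A=|X|$ and $B=|Y|$ are uniform on $[0,1]$, and the signs must make $X$ and $Y$ symmetric. Conditional on $(A,B)$, we may choose $ST=\sgn c(A,B)$ with $c(a,b)=3ab-\lambda'\min(a,b)$. I must verify that any coupling of $(A,B)$ together with any measurable sign pattern for $ST$ is realizable by a copula; this should work by choosing $S$ symmetric and independent of $(A,B)$, then setting $T=S\cdot\text{sign}$.

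The problem thus becomes the optimal transport problem
\[
\sup_{\pi\in\Pi(\Unif,\Unif)}\int |c(a,b)|\,\de\pi(a,b),
\]
whose cost $|3ab-\lambda'\min(a,b)|$ vanishes on the small region where $3\max(a,b)=\lambda'$.

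\textbf{Step 3: construct dual potentials.} For the transport problem I would exhibit $\varphi,\psi$ with $\varphi(a)+\psi(b)\ge|c(a,b)|$, with equality on the support of the magnitude coupling of $C_\theta$. The central antidiagonal block of $C_\theta$ corresponds to small magnitudes, where the sign is negative and the cost is $\lambda'\min-3ab$; on this region $\varphi$ should be built explicitly. The outer region is a rescaled copy of the footrule-type transport problem from \citet{ansari2026exact}, whose dual potential I would take over after the affine rescaling. The two pieces are then glued at the junction level $a^*$ defined in \eqref{eq:split}.

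\textbf{Main obstacle.} The hardest point is verifying the global inequality $\varphi(a)+\psi(b)\ge|c(a,b)|$ across the mixed regions, where one magnitude lies in the central block and the other in the rescaled part. I would first attempt monotonicity arguments in $b$ for fixed $a$, reducing the check to the boundary curves and the junction. Continuity and equality at the junction are the key conditions that determine $a^*$. The countably many algebraic pieces inherited from the auxiliary optimizers should enter only through the imported potential, which is already known to be valid.
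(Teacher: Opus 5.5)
Your route is the same as the paper's. The independent fair sign that realizes any magnitude coupling with any sign pattern is Lemma~\ref{lem:representations}. Your cost $|3ab-\lambda'\min(a,b)|$ is $3c_t$ with $t=\lambda'/3$. The dual is glued at $a_\theta$ from a central piece and a rescaled rho--footrule piece. Your peripheral arguments are fine, and some differ harmlessly from the paper's: you get the singleton sections via $\E|U-V|=0$, and strict monotonicity from concavity plus $\rhoup(g)<1$ for $g<1$ rather than by mixing with $M$. One slip: with $C_0=W$ and $C_\infty=M$ one has $G(0)=-1$ and $G(\infty)=1$, which does not affect the intermediate value argument. The genuine gap is that the step you label the main obstacle is the actual content of the proof, and you give no argument for it.

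Concretely, the following is missing.

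(a) The outer piece is not the footrule potential after an affine change of variables. On $[a,1]^2$ one has $xy-t\min(x,y)=\tfrac12(x^2-tx)+\tfrac12(y^2-ty)-\tfrac12\{(x-y)^2-t|x-y|\}$. The last brace becomes the rescaled footrule cost $z^2\{(u-v)^2-s|u-v|\}$, with $u=(x-a)/z$ and $v=(y-a)/z$, only when $t=sz$; this is what ties $t_\theta$ to $\theta$. Hence $f_+(x)=\tfrac12(x^2-tx)-\tfrac{z^2}{2}h_\theta((x-a)/z)$, while antidiagonal contact forces $f(x)=x(t-x)/2$ on $[0,a]$.

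(b) The imported inequality \eqref{eq:input-dual} only controls pairs in $[a,1]^2$ with $\max(x,y)\ge t$. For $a\le x\le y<t$ and for mixed pairs $x\le a\le y$, the paper needs facts the footrule theory does not supply:
$t_\theta/2\le a_\theta<t_\theta$; the matching value $f_+(a)=a(t-a)/2>0$, using $c_\theta<0$; and monotonicity of $f_+$, which rests on $|h_\theta'|\le w_\theta$ together with $1+2\theta^2c_\theta\ge\theta^2w_\theta^2$.
With these, the chain $x(t-y)\le x(t-x)\le a(t-a)=2f_+(a)\le f_+(x)+f_+(y)$ settles the first case. Concavity in $x$ then reduces the mixed case to $x\in\{0,a\}$. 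These inequalities must be checked branch by branch ($\ell=L$, $\ell=R$, half shift), which is Lemma~\ref{lem:parameter-checks}. So your claim that the algebraic pieces enter only through an already valid imported potential is inaccurate.

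(c) For $0<\theta<1$ one has $s>1$, outside the multiplier range of the rho--footrule family, so there is nothing to import. The paper constructs the half-shift potential $h_\theta(u)=\tfrac38-\tfrac s2+(s-1)\min(u,1-u)$ itself.

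Finally, you still need three further points. First, the corner blocks must lie in $\{\max(A,B)>t\}$, so that $S=+1$ is the optimal sign there; the paper deduces this from $f_+>0$. Second, you need $\gamma(C_\theta)=G(\theta)$ and $\rho(C_\theta)=P(\theta)$ (Proposition~\ref{prop:moments}). Third, you need compatibility of the piecewise formulas at $\theta=1$, $\theta=N$ and $\theta=2N(N+1)/(2N+1)$, plus the limits $\theta\to0,\infty$. Continuity cannot simply be read off the formulas.
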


In Figure~\ref{fig:region}, gamma is placed on the horizontal axis and rho on
the vertical axis.
\begin{figure}[t!]
\centering
\includegraphics[width=0.72\linewidth]{\rhogammaroot 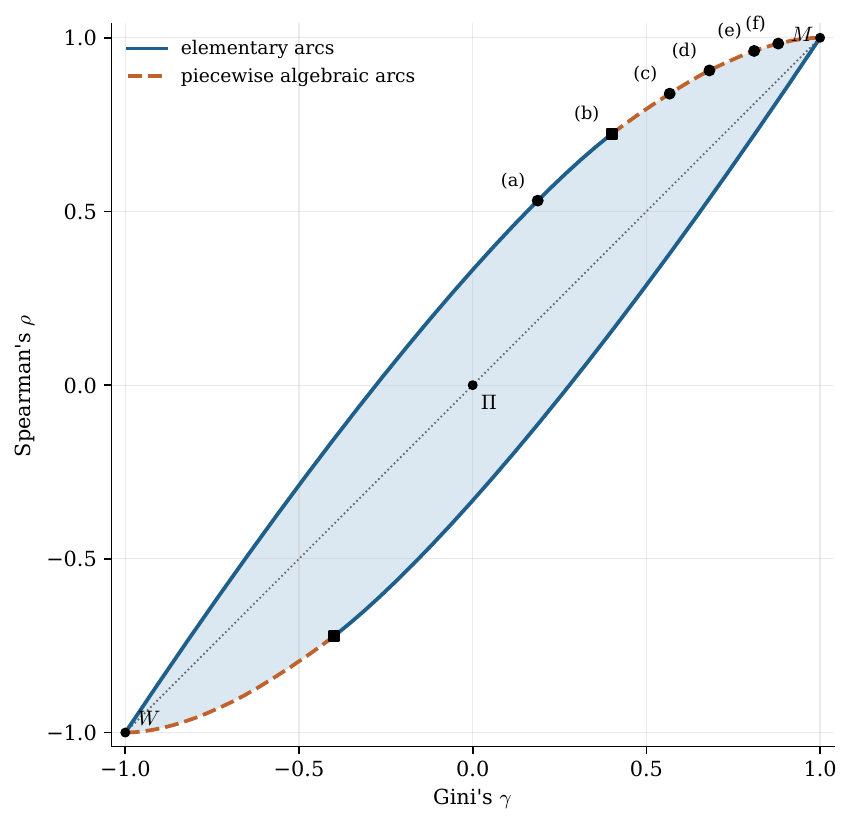}
\caption{The exact region $\Om$ from Theorem~\ref{thm:main}, with $\gamma$
on the horizontal axis and $\rho$ on the vertical axis, obtained by evaluating
\eqref{eq:param-boundary}. The elementary arcs are solid, and the countably
piecewise algebraic arcs are dashed. Squares indicate the junctions at
$\theta=1$ and its reflected point. The labels $M$, $W$ and $\Pi$ denote the
Fr\'echet--Hoeffding and independence copulas, while the dotted line represents
$\rho=\gamma$. The copulas shown in Figure~\ref{fig:extremizers} correspond to
the points (a)--(f).}
\label{fig:region}
\end{figure}
The upper boundary separates into two regimes. Set
\begin{equation}\label{eq:junction-intro}
    g_*
    =
    \frac{173-16\sqrt3}{363}
    =
    0.4002\ldots.
\end{equation}
For $-1\le g\le g_*$, the upper boundary curve \(\rhoup\) is given by the elementary formula
\begin{equation}\label{eq:elementary-intro}
    \rhoup(g)
    =
    \frac{144+420g+(9+65g)\sqrt{6-10g}}{500}.
\end{equation}
The parameter interval for this arc is $0\le\theta\le1$. Its extremizers are
shuffles of $M$ generated from the half-shift coupling, meaning the
deterministic relation $V_0=U_0+\tfrac12\pmod1$. Details are given in
Corollary~\ref{cor:elementary} and Example~\ref{ex:map}.

Within the rho--footrule problem, the half shift is the endpoint extremizer
with footrule value $-1/2$. Across the elementary regime, the normalized
coupling in each corner remains unchanged, and the construction varies only
through the relative dimensions of the central and corner blocks; compare
Figure~\ref{fig:extremizers}(a),(b). For the auxiliary distance problem
\eqref{eq:auxiliary-problem}, the largest admissible mean distance $1/2$
continues to minimize the objective, and the half shift attains it at constant
distance. Lemma~\ref{lem:halfshift-range} proves that its exact interval of
optimality is $0<\theta\le1$.

When $g_*<g<1$, equivalently $\theta>1$, the half shift is no longer optimal,
because reducing the mean distance improves the auxiliary objective. The
corner blocks then evolve through the nontrivial rho--footrule family from
\citet{ansari2026exact}, as displayed in
Figure~\ref{fig:extremizers}(c)--(f). The optimizer changes its form at
$\theta=N$ and $\theta=2N(N+1)/(2N+1)$, $N\in\mathbb N$. These transition
points produce countably many algebraic pieces converging to $(1,1)$. See
Remark~\ref{rem:anchors}.

The central reduction decomposes the centred ranks into signs and
magnitudes. Given $(U,V)\sim C$, define
\[
    X=2U-1,\qquad
    Y=2V-1,\qquad
    A=|X|,\qquad
    B=|Y|,\qquad
    S=\sgn(XY).
\]
The variables $A$ and $B$ are uniform on $\I$, and
Lemma~\ref{lem:representations} gives
\begin{equation}\label{eq:intro-signed-moments}
    \rho(C)=3\,\E[SAB],
    \qquad
    \gamma(C)=2\,\E[S\min(A,B)].
\end{equation}
It follows that, for every $t\ge0$,
\begin{equation}\label{eq:intro-sign-bound}
\begin{aligned}
    \rho(C)-\frac32t\,\gamma(C)
    &=
    3\,\E\!
    \left[
        S\min(A,B)\{\max(A,B)-t\}
    \right]\\
    &\le
    3\,\E
    \left[
        \min(A,B)|\max(A,B)-t|
    \right].
\end{aligned}
\end{equation}
Once a coupling of $(A,B)$ is fixed, equality results from taking
$S=1$ on $\{\max(A,B)\ge t\}$ and $S=-1$ otherwise.
Lemma~\ref{lem:sign-attainment} verifies that a copula with the chosen
magnitude law can realize these signs. Maximizing
\eqref{eq:intro-sign-bound} over all copulas therefore becomes
\begin{equation}\label{eq:intro-primal}
    \sup_{\pi\in\couplings}\int_{\I^2}
    \min(x,y)|\max(x,y)-t|\de\pi(x,y),
\end{equation}
where $\couplings$ denotes the probability measures on $\I^2$ whose two
marginals are the uniform probability measure $\lambda$.

The construction is organized around a splitting point $a=a_\theta$. On the
interval below $a$, the magnitudes agree but the signs are opposite, which
creates a central antidiagonal segment in the rank square. Above $a$, the
signs agree and the magnitudes follow a rescaled rho--footrule optimizer. The
result is an ordinal sum formed by a central $W$-block and two corner blocks,
with a reflection of both coordinates in the lower corner.

Global optimality is established by joining two dual potentials at $a$: the
quadratic $x(t-x)/2$ on $[0,a]$ and a rescaled rho--footrule potential,
combined with another quadratic term, on $[a,1]$. The defining choice of
$a$ makes the two expressions coincide. The essential point is to verify the
dual inequality over the whole square, especially for pairs whose coordinates
lie on opposite sides of $a$, even though the proposed coupling assigns those
pairs no mass. Equality on the support, together with the optimal assignment
of signs, then establishes the sharp supporting relation
\begin{equation}\label{eq:intro-support}
    \rho(C)-\frac32t_\theta\gamma(C)
    \le
    P(\theta)-\frac32t_\theta G(\theta),
    \qquad C\in\CC,
\end{equation}
with equality when $C=C_\theta$. Letting $\theta$ vary traces the complete
rho-maximal boundary in Theorem~\ref{thm:main}. Reflection supplies the
rho-minimal boundary, and convexity gives all intervening points.

Two sharp comparisons follow from the region. Proposition~\ref{prop:discrepancy}
establishes
\begin{equation}\label{eq:intro-discrepancy}
    \max_{C\in\CC}|\rho(C)-\gamma(C)|
    =\frac{1064+160\sqrt{10}}{4563}
    =0.34406\ldots,
\end{equation}
and this value is attained by a shuffle on the elementary arc and by its
reflection. Corollary~\ref{cor:signs} supplies the optimal sign thresholds:
$|\rho(C)|>0.33209\ldots$ or
$|\gamma(C)|>0.27858\ldots$ guarantees that rho and gamma have the same
sign. Formula \eqref{eq:thresholds} gives the constants exactly.

Section~\ref{sec:main} presents the boundary parametrization and the ensuing
comparisons. In Section~\ref{sec:construction}, we derive the sign--magnitude
reduction and build the extremal copulas. Section~\ref{sec:proof} establishes
the global dual inequality, proves Theorem~\ref{thm:main} and derives its
consequences.

\section{Boundary parametrization and consequences}\label{sec:main}

Three ingredients determine the parametrization: the first and second
moments of the distance under a rho--footrule optimizer, together with the
endpoint evaluation of its dual potential. We record these data first and
then derive the scale at which the optimizer enters the extremal rho--gamma
copula.

\subsection{The rho--footrule input and the two parameter regimes}
\label{sec:parametrization}

Fix $\theta>0$ and set $s=1/\theta$. In what follows, $s$ is the coefficient
of the distance term in the auxiliary rho--footrule problem
\begin{equation}\label{eq:auxiliary-problem}
\min_{\pi\in\couplings}
\E_\pi\bigl[|U_0-V_0|^2-s|U_0-V_0|\bigr].
\end{equation}
Denote the optimizer and dual potential from Lemma~\ref{lem:input} by
$(\pi_\theta,h_\theta)$, and define
\[
m_\theta=\E_{\pi_\theta}|U_0-V_0|,
\qquad
q_\theta=\E_{\pi_\theta}|U_0-V_0|^2.
\]
We use $c_\theta=h_\theta(0)$ for the value of the potential at the endpoint.

When $0<\theta\le1$, the optimizer takes the half-shift form
\[
V_0=U_0+\frac12\pmod1,
\]
as established in Lemma~\ref{lem:halfshift-range}. It follows that
\begin{equation}\label{eq:large-parameters}
m_\theta=\frac12,\qquad
q_\theta=\frac14,\qquad
c_\theta=\frac38-\frac s2.
\end{equation}
Because the distance moments remain constant throughout this range, they
produce the elementary branch in Corollary~\ref{cor:elementary}.

Suppose instead that $\theta>1$. The solution of
\eqref{eq:auxiliary-problem} is then the nontrivial rho--footrule optimizer
constructed by \citet{ansari2026exact}. Introduce
\[
N=\lfloor\theta\rfloor,\qquad
L=\frac1{2N},\qquad
R=\frac1{2(N+1)},
\]
for which $2R<s\le2L$, and set
\begin{equation}\label{eq:small-parameters}
\ell=
\begin{cases}
L,&s\ge L+R,\\
R,&s<L+R,
\end{cases}
\qquad
\delta=s-2\ell,
\qquad
p=1-2N(N+1)|\delta|.
\end{equation}
These quantities yield
\begin{equation}\label{eq:moments}
\begin{aligned}
m_\theta&=\ell+N(N+1)\delta|\delta|,\\
q_\theta&=\ell(2m_\theta-\ell)
          +\frac23N(N+1)|\delta|^3,\\
c_\theta&=\frac{\ell^2-s\ell-\ell p\delta}{2}.
\end{aligned}
\end{equation}
Under $\pi_\theta$, the distance distribution places mass $p$ at $\ell$ and
has density $2N(N+1)$ on the interval with endpoints $\ell$ and
$\ell+\delta$. Consequently, $|\delta|\le L-R$. The integer values
$\theta=N$ are therefore the distinguished points where $\delta=0$. A second
family of junctions occurs at
\[
\theta=\frac1{L+R}=\frac{2N(N+1)}{2N+1}
\]
where $p=0$ and the two adjacent formulas coincide. Since these junctions
converge to $\theta=\infty$, the boundary segment tending to $(1,1)$ splits
into countably many algebraic arcs.

\subsection{From the corner optimizer to the rho--gamma boundary}

Write $a_\theta$ for both the length and the mass of the central magnitude
interval, and let $z_\theta=1-a_\theta$. The supporting parameter is
$t_\theta=s z_\theta=z_\theta/\theta$. We normalize the central length by
$\alpha_\theta=a_\theta/t_\theta$. Hence
$a_\theta=\alpha_\theta t_\theta$ and $z_\theta=\theta t_\theta$, while the
dual-potential matching condition from Section~\ref{sec:gluing} becomes
\[
\alpha_\theta^2-\alpha_\theta=\frac{\theta^2c_\theta}{2}.
\]
Taking the larger solution gives
\begin{equation}\label{eq:split}
\alpha_\theta=\frac{1+\sqrt{1+2\theta^2c_\theta}}{2},
\qquad
t_\theta=\frac1{\theta+\alpha_\theta},
\qquad
a_\theta=\alpha_\theta t_\theta,
\qquad
z_\theta=\theta t_\theta.
\end{equation}
By Lemma~\ref{lem:parameter-checks}, the expression under the square root is
positive and \(t_\theta/2\le a_\theta<t_\theta\). These bounds provide the
global dual feasibility needed in Lemma~\ref{lem:extension}.

The resulting boundary coordinates are
\begin{equation}\label{eq:param-boundary}
G(\theta)
=
1-2a_\theta^2-z_\theta^2m_\theta,
\qquad
P(\theta)
=
1-2a_\theta^3-\frac32z_\theta^3q_\theta,
\end{equation}
subject to the endpoint conventions
\[
(G(0),P(0))=(-1,-1),
\qquad
(G(\infty),P(\infty))=(1,1).
\]
Proposition~\ref{prop:moments} gives $G(\theta)=\gamma(C_\theta)$ and
$P(\theta)=\rho(C_\theta)$, and Theorem~\ref{thm:main} places
$(P(\theta),G(\theta))$ on the rho-maximal boundary.

The three-block ordinal sum $C_\theta$ is displayed in
Figure~\ref{fig:extremizers}. Its central $W$-block has side length
$a_\theta$, whereas each corner has side length $z_\theta/2$ and carries a
rescaled version of $\pi_\theta$. In the lower corner, both coordinates are
reflected. The relation $t_\theta=s z_\theta$ explains why the linear and
quadratic distance contributions involve different powers of $z_\theta$.

\begin{figure}[t!]
\centering
\includegraphics[width=\linewidth]{\rhogammaroot 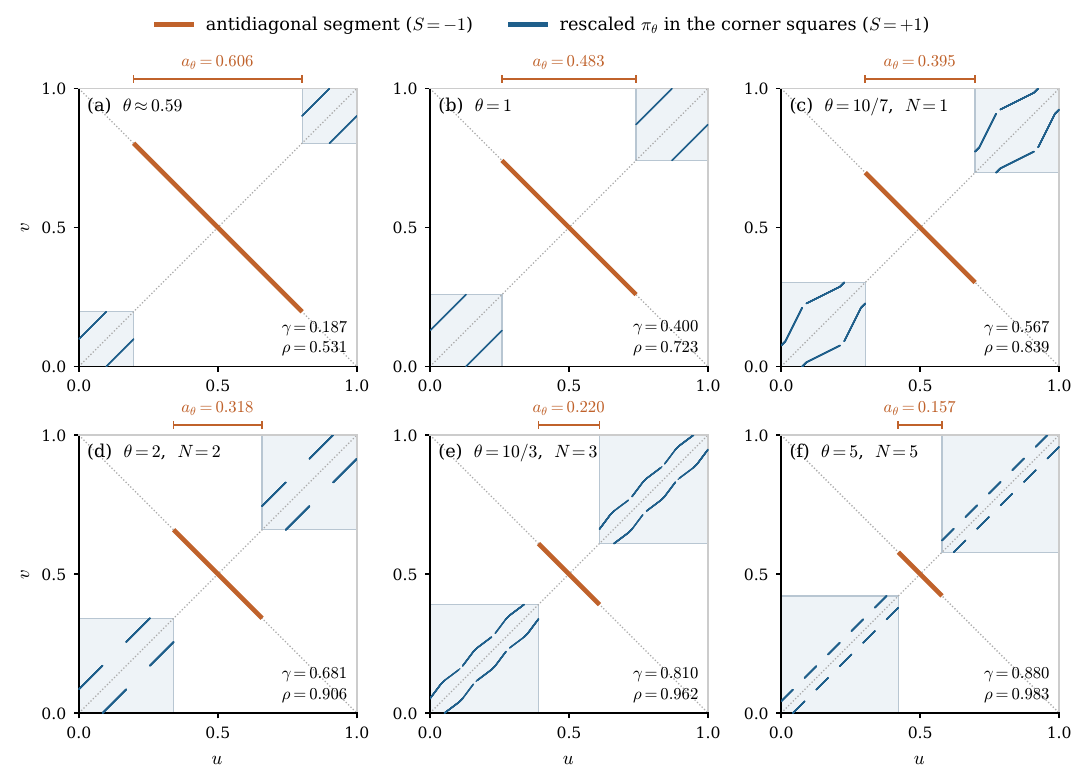}
\caption{Support sets of $C_\theta$ at the points labeled (a)--(f) in
Figure~\ref{fig:region}. The orange segment spans $a_\theta$ in each
coordinate, has mass $a_\theta$, and corresponds to opposite rank signs
($S=-1$). Each blue corner has side length and mass $z_\theta/2$ and
corresponds to equal rank signs ($S=1$). Simultaneous reflection of both
coordinates maps the upper corner to the lower one. In panels (a),(b), the
corner coupling is the fixed half shift. Panels (c)--(f) depict the varying
rho--footrule corner coupling using assignment approximations to $\pi_\theta$
on 2000 midpoint ranks, where $N=\lfloor\theta\rfloor$. The maximal
discrepancy occurs in panel (a), and panel (b) indicates the junction
$\theta=1$.}
\label{fig:extremizers}
\end{figure}

For the elementary branch, define
\begin{equation}\label{eq:junction}
d_*=\frac{12-2\sqrt3}{33}=0.2586\ldots,
\qquad
g_*=\frac{173-16\sqrt3}{363}=0.4002\ldots.
\end{equation}

\begin{samepage}
\begin{corollary}[The elementary arc]\label{cor:elementary}
If $-1\le g\le g_*$, then
\begin{equation}\label{eq:elementary}
\rhoup(g)
=
\frac{144+420g+(9+65g)\sqrt{6-10g}}{500}.
\end{equation}
Equivalently, the same portion of the upper boundary has the parametrization
\begin{equation}\label{eq:d-arc}
g=-1+8d-10d^2,
\qquad
r=-1+12d-24d^2+13d^3,
\qquad
0\le d\le d_*.
\end{equation}
For every $d>0$, Example~\ref{ex:map} gives a five-piece shuffle of $M$
that attains the corresponding point. The endpoint $d=0$ is attained by $W$.
\end{corollary}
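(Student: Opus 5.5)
The plan is to specialize the parametrization \eqref{eq:param-boundary} of Theorem~\ref{thm:main} to the range $0<\theta\le1$. There Lemma~\ref{lem:halfshift-range} gives the half-shift data \eqref{eq:large-parameters}: $m_\theta=\tfrac12$, $q_\theta=\tfrac14$ and $c_\theta=\tfrac38-\tfrac1{2\theta}$. The key observation is that the formulas for $G$ and $P$ depend on $\theta$ only through $a_\theta$ and $z_\theta$. By \eqref{eq:split}, $a_\theta+z_\theta=(\alpha_\theta+\theta)t_\theta=1$. I will therefore use the single parameter $d\coloneqq z_\theta/2$, so that $a_\theta=1-2d$.

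Substituting into \eqref{eq:param-boundary} gives
\[
G=1-2(1-2d)^2-\tfrac12(2d)^2=-1+8d-10d^2,
\qquad
P=1-2(1-2d)^3-\tfrac38(2d)^3=-1+12d-24d^2+13d^3.
\]
These are exactly the formulas in \eqref{eq:d-arc}.

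Next I will identify the range of $d$. The map $\theta\mapsto d(\theta)=\theta/\bigl(2(\theta+\alpha_\theta)\bigr)$ is continuous on $[0,1]$, since $1+2\theta^2c_\theta=1-\theta+\tfrac34\theta^2>0$ there (Lemma~\ref{lem:parameter-checks}). At $\theta=0$ we have $d=0$. At $\theta=1$ we get $\alpha_1=(2+\sqrt3)/4$, hence $d(1)=2/(6+\sqrt3)=(12-2\sqrt3)/33=d_*$. By the intermediate value theorem every $d\in[0,d_*]$ equals $d(\theta)$ for some $\theta\in[0,1]$, so no monotonicity of $d(\theta)$ is needed. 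Theorem~\ref{thm:main} then shows that each point of \eqref{eq:d-arc} with $0\le d\le d_*$ lies on the graph of $\rhoup$. The endpoint $d=0$ corresponds to $\theta=0$, i.e.\ to $C_0$, which realizes $(-1,-1)$, the value pair of $W$; this is the endpoint of the arc described in the statement.

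To pass to \eqref{eq:elementary}, I will note that $g(d)=-1+8d-10d^2$ satisfies $g'(d)=8-20d>0$ for $d<0.4$. Since $d_*<0.4$, the map $d\mapsto g(d)$ is a strictly increasing bijection from $[0,d_*]$ onto $[-1,g(d_*)]$. A direct check shows $g(d_*)=g_*$. Solving $10d^2-8d+(1+g)=0$ and taking the smaller root gives $d=\bigl(4-\sqrt{6-10g}\bigr)/10$. Inserting this into $r(d)$ and simplifying, with $w=\sqrt{6-10g}$ and $w^2=6-10g$ used to reduce powers of $w$, should give $\bigl(144+420g+(9+65g)w\bigr)/500$. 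This is routine algebra, and I would confirm it at $g=-1$, where $w=4$ and the value is $-1$.

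For attainment when $d>0$, I will invoke Example~\ref{ex:map}: for $\theta\in(0,1]$ the copula $C_\theta$ is the ordinal sum of the $W$-block with two half-shift corner blocks. A half shift splits into two pieces of $M$, so the corners contribute four pieces and the $W$-block one, giving the five-piece shuffle. The main obstacle is only the algebraic elimination in the previous step, including the check $g(d_*)=g_*$. The conceptual work is already contained in Theorem~\ref{thm:main} and the identity $a_\theta+z_\theta=1$.
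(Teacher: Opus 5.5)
Your proposal is correct and follows the paper's proof essentially step by step. You substitute the half-shift data into \eqref{eq:split}--\eqref{eq:param-boundary} with $d=z_\theta/2$, identify the $d$-range through $\alpha_1=(2+\sqrt3)/4$, invert $g(d)$ on $[0,d_*]$, and cite Example~\ref{ex:map} for attainment.

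The only difference is how you obtain the full range $[0,d_*]$. You use the intermediate value theorem, whereas the paper notes that $\alpha_\theta/\theta=(s+\sqrt{s^2-s+3/4})/2$ decreases in $\theta$. Your route is legitimate, because $\rhoup$ is single-valued by Theorem~\ref{thm:main} and $d\mapsto g(d)$ is a bijection of $[0,d_*]$ onto $[-1,g_*]$.
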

\end{samepage}

\begin{proof}
Assume $0<\theta\le1$. Substituting \eqref{eq:large-parameters} into
\eqref{eq:split}--\eqref{eq:param-boundary} and setting
$d=z_\theta/2$, so that $a_\theta=1-2d$, yields
\[
G(\theta)=1-2(1-2d)^2-2d^2,
\qquad
P(\theta)=1-2(1-2d)^3-3d^3,
\]
which agrees with \eqref{eq:d-arc}. We also have
\[
\frac{\alpha_\theta}{\theta}
=\frac{s+\sqrt{s^2-s+3/4}}2.
\]
As $\theta$ runs from $0$ to $1$, this ratio decreases continuously from
infinity to $(2+\sqrt3)/4$. Therefore
$d=\theta/(2(\theta+\alpha_\theta))$ grows from $0$ to $d_*$. On
$[0,d_*]$, the first polynomial in \eqref{eq:d-arc} is strictly increasing,
and its inverse is
\[
d=\frac{4-\sqrt{6-10g}}{10}.
\]
Inserting this expression into the second polynomial proves
\eqref{eq:elementary}. Example~\ref{ex:map} together with
Theorem~\ref{thm:main} establishes attainment.
\end{proof}

\begin{remark}[The arc towards $(1,1)$]\label{rem:anchors}
For $\theta>1$, the formulas have breakpoints at
\[
\theta=N
\qquad\text{and}\qquad
\theta=\frac{2N(N+1)}{2N+1},
\qquad N\in\mathbb N.
\]
At an integer value $\theta=N$, we have $\delta=0$ and
$\alpha_N=\kappa\coloneqq(2+\sqrt3)/4$, and therefore
\[
G(N)
=
1-\frac{2\kappa^2+N/2}{(N+\kappa)^2},
\qquad
P(N)
=
1-\frac{2\kappa^3+3N/8}{(N+\kappa)^3}.
\]
This sequence tends to $(1,1)$. Between any two consecutive junctions, the
formulas use only rational operations and a single square root. Hence each
boundary piece is algebraic.

Near comonotonicity, the boundary satisfies
\begin{equation}\label{eq:asymptotic}
\rhoup(g)
=
1-\frac32(1-g)^2+O((1-g)^3),
\qquad g\uparrow1.
\end{equation}
Indeed, $\theta\to\infty$ implies $s\downarrow0$, and throughout all pieces
$|\delta|=O(s^2)$ uniformly. Consequently,
$m_\theta=s/2+O(s^2)$, $q_\theta=s^2/4+O(s^3)$, and $c_\theta=O(s^2)$.
Because $\alpha_\theta$ stays bounded, \eqref{eq:split} then yields
$a_\theta=O(s)$ and $z_\theta=1+O(s)$. Thus
$1-G(\theta)=s/2+O(s^2)$ and $1-P(\theta)=3s^2/8+O(s^3)$.
\end{remark}

\subsection{Two consequences}

We first identify the greatest possible separation between rho and gamma.

\begin{proposition}[Largest discrepancy]\label{prop:discrepancy}
Define
\begin{equation}\label{eq:discrepancy-constant}
d_0=\frac{14-2\sqrt{10}}{39}=0.1968\dots,
\qquad
\Delta_0
=
\frac{1064+160\sqrt{10}}{4563}
=
0.34406\ldots.
\end{equation}
Then
\begin{equation}\label{eq:discrepancy}
\max_{C\in\CC}|\rho(C)-\gamma(C)|=\Delta_0.
\end{equation}
The largest value of $\rho-\gamma$ occurs on the elementary boundary at
$d=d_0$, at the point
\[
(\rho,\gamma)
=
(0.5311\ldots,\,0.1871\ldots),
\]
while its reflection attains the largest value of $\gamma-\rho$.
\end{proposition}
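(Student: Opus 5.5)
Since $\Om$ is described by Theorem~\ref{thm:main}, we have $\max_C(\rho-\gamma)=\max_{-1\le g\le1}(\rhoup(g)-g)$. Reflecting one coordinate maps $(\rho,\gamma)$ to $(-\rho,-\gamma)$, so the region is centrally symmetric and $\max(\gamma-\rho)=\max(\rho-\gamma)$. Hence it suffices to maximize $\rhoup(g)-g$ over $g$, and the reflection of the maximizing copula then attains the largest value of $\gamma-\rho$.

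The plan has two stages. First, on the elementary arc $-1\le g\le g_*$, use the parametrization \eqref{eq:d-arc} from Corollary~\ref{cor:elementary}. With $f(d)=r-g=12d-24d^2+13d^3-8d+10d^2=4d-14d^2+13d^3$, we get $f'(d)=4-28d+39d^2$, whose roots are $d=(14\mp2\sqrt{10})/39$. The smaller root $d_0$ lies in $(0,d_*)$, since $0.1968<0.2586$, and it is a local maximum because $f''(d_0)=-28+78d_0<0$. Evaluating $f(d_0)$ should give $\Delta_0$; this is routine radical arithmetic, using $39d_0^2=28d_0-4$ to reduce powers of $d_0$. From \eqref{eq:d-arc} we can also read off the values $g(d_0)\approx0.1871$ and $r(d_0)\approx0.5311$. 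Since $f$ increases on $[0,d_0]$ and decreases on $[d_0,d_*]$, the maximum over the elementary arc is $f(d_0)$. Attainment follows from the shuffle in Example~\ref{ex:map}.

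Second, on the remaining part $g_*\le g\le1$, concavity of $\rhoup$ (Theorem~\ref{thm:main}) makes $g\mapsto\rhoup(g)-g$ concave. It therefore suffices to check that its left derivative at $g_*$ is $\le0$. Differentiating \eqref{eq:elementary}, this amounts to showing $\rhoup'(g)\le1$ for $g$ near $g_*$. In the $d$-parametrization, $dr/dg=(12-48d+39d^2)/(8-20d)$, and this is $\le1$ exactly when $f'(d)\le0$, i.e. for $d\ge d_0$. Since $d_*>d_0$, the slope at the junction is below $1$. By concavity the slope stays below $1$ for all $g\ge g_*$, so $\rhoup(g)-g$ is nonincreasing there and cannot exceed its value at $g_*$, which is itself at most $\Delta_0$.

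The only delicate step is this concavity argument at the junction. It relies on Theorem~\ref{thm:main} supplying global concavity, rather than on an analysis of the countably many algebraic pieces. One must also check that the left one-sided slope at $g_*$ computed from the elementary formula is a valid supergradient bound; for a concave function, right derivatives are at most left derivatives, so this holds. The remaining work is the exact simplification of $f(d_0)$ to $(1064+160\sqrt{10})/4563$.
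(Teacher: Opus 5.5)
Your proof is correct, but it takes a different route from the paper. You use Theorem~\ref{thm:main} as a black box. Calculus on the elementary parametrization \eqref{eq:d-arc} puts the maximum of $4d-14d^2+13d^3$ on $[0,d_*]$ at $d_0$; your reduction via $39d_0^2=28d_0-4$ gives $(56-80d_0)/117=\Delta_0$. Concavity of $\rhoup$, combined with a boundary slope below $1$ at $g_*$, then disposes of the countably many algebraic pieces beyond the junction.

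The paper instead applies the supporting inequality directly. The functional $\rho-\gamma$ is the case $\tfrac32t=1$, i.e.\ $t=\tfrac23$, of \eqref{eq:sharp-support}. The paper identifies $\theta_0=3d_0$ as the elementary-arc parameter with $t_{\theta_0}=\tfrac23$, by checking that $\alpha=\tfrac32-3d_0$ solves $\alpha^2-\alpha=\theta_0^2c_{\theta_0}/2$ and exceeds $\tfrac12$. Theorem~\ref{thm:support} then gives $\rho(C)-\gamma(C)\le P(\theta_0)-G(\theta_0)$ for every copula in one step.

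The paper's argument is a single dual certificate. It needs no concavity, no derivative computation and no look at the boundary beyond $g_*$. It also explains why $d_0$ is the maximizer: the supporting line at parameter $\theta$ has slope $\tfrac32t_\theta$, which equals $1$ exactly at $\theta_0$. The price is that one must know $\theta_0$ in advance and verify the root selection in \eqref{eq:split}.

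Your argument needs no such guess and is the standard way to maximize a linear functional over a convex region with a known concave boundary. The price is that it relies on the full conclusion of Theorem~\ref{thm:main} (region description and concavity of $\rhoup$), not on one supporting inequality. It also relies on the explicit elementary formula to get the one-sided slope at $g_*$. Your observation that the left derivative of a concave function is a supergradient is exactly what makes that step valid.
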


We next give the exact range of either coefficient subject to the other
being zero.

\begin{corollary}[Sign thresholds]\label{cor:signs}
Set
\begin{equation}\label{eq:thresholds}
r_0=\frac{144+9\sqrt6}{500}=0.33209\ldots,
\qquad
g_0=\frac{72-3\sqrt{69}}{169}=0.27858\ldots.
\end{equation}
Then
\begin{equation}\label{eq:zero-sections}
\{\rho(C):\gamma(C)=0\}=[-r_0,r_0],
\qquad
\{\gamma(C):\rho(C)=0\}=[-g_0,g_0].
\end{equation}
It follows that $\gamma(C)$ and $\rho(C)$ share a sign whenever
$|\rho(C)|>r_0$, and that $\rho(C)$ and $\gamma(C)$ share a sign whenever
$|\gamma(C)|>g_0$. Both thresholds are sharp.
\end{corollary}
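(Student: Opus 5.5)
The plan is to read both zero-sections directly off the description \eqref{eq:exact-region} in Theorem~\ref{thm:main}. Then I evaluate the boundary with the elementary arc of Corollary~\ref{cor:elementary}, since both relevant values of $g$ turn out to lie in $[-1,g_*]$.

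\emph{First section.} By \eqref{eq:exact-region}, the set $\{\rho(C):\gamma(C)=0\}$ equals $[-\rhoup(-0),\rhoup(0)]=[-\rhoup(0),\rhoup(0)]$. Since $0\le g_*$, formula \eqref{eq:elementary} gives
\[
\rhoup(0)=\frac{144+9\sqrt6}{500}=r_0 .
\]

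\emph{Second section.} By \eqref{eq:exact-region}, a value $g$ belongs to $\{\gamma(C):\rho(C)=0\}$ if and only if $\rhoup(g)\ge0$ and $\rhoup(-g)\ge0$.
\begin{itemize}
\item Since $\rhoup$ is continuous and strictly increasing with $\rhoup(\pm1)=\pm1$, there is a unique $g_1\in(-1,1)$ with $\rhoup(g_1)=0$.
\item Because $\rhoup(0)=r_0>0$, we have $g_1<0$.
\item Hence $\rhoup(g)\ge0$ iff $g\ge g_1$, and $\rhoup(-g)\ge0$ iff $g\le -g_1$. The section is therefore $[g_1,-g_1]$, and it remains to show $-g_1=g_0$.
\end{itemize}
Since $g_1<0<g_*$, I locate $g_1$ with the parametrization \eqref{eq:d-arc}. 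Setting $r=0$ gives the cubic
\[
13d^3-24d^2+12d-1=(d-1)(13d^2-11d+1)=0 .
\]
Its root in $[0,d_*]$ is
\[
d_1=\frac{11-\sqrt{69}}{26}\approx0.1036 .
\]
The other roots, $1$ and $(11+\sqrt{69})/26$, lie outside $[0,d_*]$. On $[0,d_*]$ the map $d\mapsto g$ is strictly increasing, so this root is the relevant one. Substituting $d_1$ into $g=-1+8d-10d^2$, and using $13d_1^2=11d_1-1$ to reduce the quadratic term, should give $g_1=-(72-3\sqrt{69})/169=-g_0$. This is a routine computation; checking it carefully is the only real computational step.

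\emph{Sign consequences.}
\begin{itemize}
\item Suppose $\rho(C)>r_0$ but $\gamma(C)\le0$. Monotonicity of $\rhoup$ gives $\rho(C)\le\rhoup(\gamma(C))\le\rhoup(0)=r_0$, a contradiction.
\item Suppose $\gamma(C)>g_0$ but $\rho(C)\le0$. Then $-\rhoup(-\gamma(C))\le\rho(C)\le0$, so $\rhoup(-\gamma(C))\ge0$. This forces $-\gamma(C)\ge g_1=-g_0$, a contradiction.
\item The cases $\rho(C)<-r_0$ and $\gamma(C)<-g_0$ follow by central symmetry of $\Om$.
\end{itemize}

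\emph{Sharpness.} Fix $r<r_0$. By continuity of $\rhoup$, for small $\varepsilon>0$ the point $(\rhoup(-\varepsilon),-\varepsilon)$ lies in $\Om$ and has first coordinate in $(r,r_0)$. So there is a copula with $\rho>r$ and $\gamma<0$, and the threshold $r_0$ cannot be lowered. The same argument near $(0,g_0)$ works for $g_0$: take points $(-\rhoup(-g),g)$ with $g$ slightly below $g_0$. Strict monotonicity of $\rhoup$ gives $-\rhoup(-g)<0$ there, so these points have $\gamma>0$ and $\rho<0$.
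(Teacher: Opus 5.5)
Your proof is correct and follows the paper's argument. You read both sections off \eqref{eq:exact-region}, evaluate $\rhoup(0)$ from \eqref{eq:elementary}, and locate the zero of $\rhoup$ through the factored cubic on the $d$-arc. The computation you deferred does work out: $13d_1^2=11d_1-1$ gives $g=-(3+6d_1)/13=-g_0$.

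The only difference is in sharpness. The paper cites attainment of $(\pm r_0,0)$ and $(0,\pm g_0)$. You instead exhibit points with strictly opposite signs arbitrarily close to the thresholds, which keeps the claim valid even under a weak reading of ``share a sign''.
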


Proofs of Proposition~\ref{prop:discrepancy} and
Corollary~\ref{cor:signs} appear in
Section~\ref{sec:consequence-proofs}.
\section{Signed magnitudes and extremal copulas}\label{sec:construction}

This section reduces the construction to a transport problem for magnitudes,
summarizes the rho--footrule ingredients, and builds the boundary-attaining copulas.

\subsection{Separating signs and magnitudes}\label{sec:signs}

The next representation decomposes centred ranks into their signs and
magnitudes. The converse statement ensures that these two components may be
optimized independently while preserving uniform rank margins.

\begin{lemma}[Moment representations]\label{lem:representations}
	Suppose $(U,V)\sim C\in\CC$ and define $X=2U-1$, $Y=2V-1$, $A=|X|$, $B=|Y|$ and $S=\sgn(XY)$.
	The variables $A$ and $B$ are uniformly distributed on $(0,1)$, and
	\begin{equation}\label{eq:rank-moments}
		\rho(C)=12\,\E[UV]-3=1-6\,\E(U-V)^2,\qquad
		\gamma(C)=2\,\E\bigl[|U+V-1|-|U-V|\bigr],
	\end{equation}
	\begin{equation}\label{eq:signed-moments}
		\rho(C)=3\,\E[SAB],\qquad
		\gamma(C)=2\,\E\bigl[S\min(A,B)\bigr].
	\end{equation}
	Conversely, for every random vector $(A,B,S)$ with $A,B\sim\Unif(0,1)$ and $S\in\{-1,1\}$, there exists a copula $C$ such that, for $(U,V)\sim C$, the magnitude pair $(|2U-1|,|2V-1|)$ and the sign $\sgn((2U-1)(2V-1))$ jointly have the law prescribed by $(A,B,S)$. Consequently, $C$ obeys \eqref{eq:signed-moments} for this triple.
\end{lemma}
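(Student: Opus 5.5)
The plan is to prove the moment identities by direct computation and the converse by an explicit randomized sign construction.

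\emph{Uniformity of the magnitudes.} Since $U\sim\Unif(0,1)$, the variable $X=2U-1$ is uniform on $(-1,1)$. Hence $A=|X|$ satisfies $\PP(A\le x)=\PP(-x\le X\le x)=x$, and likewise for $B$.

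\emph{The identities \eqref{eq:rank-moments}.} For $\rho$ I use $\E[U^2]=\E[V^2]=1/3$. This gives
\[
1-6\,\E(U-V)^2=1-6\bigl(\tfrac23-2\E[UV]\bigr)=12\,\E[UV]-3 .
\]
The gamma identity is the standard moment form. I would derive it from \eqref{eq:defs} via the layer-cake representation. Write $C(u,u)=\PP(\max(U,V)\le u)$, so that $\int_0^1C(u,u)\de u=1-\E\max(U,V)$. Similarly $\int_0^1C(u,1-u)\de u=\E[(1-U-V)^+]$, since $C(u,1-u)=\PP(U\le u\le 1-V)$. Then apply $\max(U,V)=\tfrac12(U+V+|U-V|)$ and $(1-U-V)^+=\tfrac12(|U+V-1|-(U+V-1))$. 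The linear terms cancel after using $\E U=\E V=1/2$, and one obtains $\gamma=2\E[|U+V-1|-|U-V|]$. Since the paper already cites this identity, a brief derivation suffices.

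\emph{The signed identities \eqref{eq:signed-moments}.} Substituting $U=(X+1)/2$ and $V=(Y+1)/2$ gives $12\E[UV]-3=3\E[XY]+3(\E X+\E Y)=3\E[XY]$. Moreover $XY=SAB$ almost surely, where ties at $0$ are null. For gamma, $|U+V-1|=|X+Y|/2$ and $|U-V|=|X-Y|/2$. The elementary identity $|x+y|-|x-y|=2\sgn(xy)\min(|x|,|y|)$ then yields $\gamma=\E[|X+Y|-|X-Y|]=2\E[S\min(A,B)]$. To verify the identity, assume without loss of generality that $|x|\ge|y|$. Then $|x\pm y|=|x|\pm\sgn(x)y$, and the difference is $2\sgn(x)y=2\sgn(xy)|y|$.

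\emph{The converse.} This is the only step requiring care. Given $(A,B,S)$, I introduce an independent Rademacher variable $\varepsilon$, taking the values $\pm1$ with probability $1/2$ each, defined on an enlarged probability space. Set $X=\varepsilon A$ and $Y=\varepsilon S B$. Then $|X|=A$, $|Y|=B$ and $\sgn(XY)=S$, so $(|X|,|Y|,\sgn(XY))$ has exactly the law of $(A,B,S)$.

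It remains to check the margins. Conditionally on $A$, the sign of $X$ is a fair coin, so $X$ is symmetric with $|X|$ uniform, hence $X\sim\Unif(-1,1)$. For $Y$, conditionally on $(B,S)$, the factor $\varepsilon S$ is again a fair sign independent of $B$, so $Y\sim\Unif(-1,1)$ as well. Therefore $U=(X+1)/2$ and $V=(Y+1)/2$ are uniform, and their joint law is a copula $C$ with the required property.

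The final claim follows by applying the forward direction to this $C$. The main, if mild, obstacle is making sure that the randomization preserves uniform margins while fixing the joint law of the magnitudes and the product sign; the independent global sign $\varepsilon$ resolves exactly this.
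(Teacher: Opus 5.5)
Your proposal is correct and follows essentially the same route as the paper: Fubini (your layer-cake step) for the gamma moment form, centring together with the identity $|x+y|-|x-y|=2\sgn(xy)\min(|x|,|y|)$ for the signed moments, and an independent fair sign $\varepsilon$ with $X=\varepsilon A$, $Y=\varepsilon SB$ for the converse. You also supply two details the paper leaves implicit, namely the case check behind the sign identity and the argument that $\varepsilon S$ is independent of $B$, which is what makes the margin of $Y$ uniform.
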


\begin{proof}
	The standard formula $\rho(C)=12\,\E[UV]-3$ appears in \citet[Sec.~5.1]{nelsen2006introduction}, and $\E U^2=\E V^2=1/3$ converts it into the second expression for $\rho$ in \eqref{eq:rank-moments}.
	Applying Fubini's theorem to $\gamma$ yields
	\[
		\int_0^1C(u,u)\de u=\E[1-\max(U,V)],\qquad\int_0^1C(u,1-u)\de u=\E[(1-U-V)_+].
	\]
	Combining these formulas with $|U-V|=2\max(U,V)-U-V$, $|U+V-1|=2(1-U-V)_++U+V-1$ and $\E(U+V-1)=0$ proves the asserted expression for $\gamma$ in \eqref{eq:rank-moments}.
	After centring, we have $12\,\E[UV]-3=3\,\E[XY]$ and $2\,\E[|U+V-1|-|U-V|]=\E[|X+Y|-|X-Y|]$. The relation
	\[
		|X+Y|-|X-Y|=2\sgn(XY)\min(|X|,|Y|)
	\]
	therefore proves \eqref{eq:signed-moments}, while the uniformity of $A$ and $B$ follows because both $X$ and $Y$ are uniform on $[-1,1]$.

	To prove the converse, introduce a fair random sign $\varepsilon$ independent of $(A,B,S)$, and define $X=\varepsilon A$ and $Y=\varepsilon SB$.
	Conditional on $(A,B,S)$, the signs $\varepsilon$ and $\varepsilon S$ are each fair, so every uniform magnitude is placed on the positive and negative half-intervals with equal probability.
	It follows that $X$ and $Y$ are uniform on $[-1,1]$, with $|X|=A$, $|Y|=B$ and $\sgn(XY)=S$. Thus the copula of $\bigl((1+X)/2,(1+Y)/2\bigr)$ has the claimed moments.
\end{proof}

For $t\ge0$ and $x,y\in\I$, define $c_t(x,y)\coloneqq\min(x,y)\,|\max(x,y)-t|$.
Combining Lemma~\ref{lem:representations} with $AB-t\min(A,B)=\min(A,B)\{\max(A,B)-t\}$ gives the supporting functional
\begin{equation}\label{eq:signed-objective}
	\rho(C)-\tfrac32t\,\gamma(C)=3\,\E\bigl[S\min(A,B)\{\max(A,B)-t\}\bigr]\le3\,\E c_t(A,B),
\end{equation}
and equality occurs precisely when $S=\sgn(\max(A,B)-t)$ almost surely on $\{c_t(A,B)>0\}$.
Thus the sign is optimized pointwise, and the only remaining optimization concerns the coupling between the two uniform magnitudes.

\begin{lemma}[Attainability of the sign bound]\label{lem:sign-attainment}
Given $t\ge0$ and any coupling $(A,B)$ of uniform magnitudes, one can find a copula with this magnitude distribution that attains equality in \eqref{eq:intro-sign-bound}.
\end{lemma}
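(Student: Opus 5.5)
The plan is to build the sign deterministically from the magnitudes and then apply the converse part of Lemma~\ref{lem:representations}. Fix $t\ge0$ and a coupling $(A,B)$ of two $\Unif(0,1)$ variables. Define the random sign
\[
S\coloneqq\begin{cases}1,&\max(A,B)\ge t,\\-1,&\max(A,B)<t,\end{cases}
\]
so that $S\in\{-1,1\}$ is a measurable function of $(A,B)$. The triple $(A,B,S)$ then satisfies the hypotheses of the converse in Lemma~\ref{lem:representations}. That lemma supplies a copula $C$ whose magnitude pair $(|2U-1|,|2V-1|)$ and sign $\sgn((2U-1)(2V-1))$ jointly have the law of $(A,B,S)$. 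In particular, the magnitude distribution of $C$ is the prescribed coupling.

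Next I will check equality in \eqref{eq:intro-sign-bound}. By \eqref{eq:signed-moments} applied to $C$,
\[
\rho(C)-\tfrac32t\gamma(C)=3\E\bigl[S\min(A,B)\{\max(A,B)-t\}\bigr],
\]
where the expectation depends only on the joint law of $(A,B,S)$. With the choice of $S$ above, the integrand equals $\min(A,B)\,|\max(A,B)-t|=c_t(A,B)$ pointwise. On $\{\max(A,B)\ge t\}$ we have $S=1$ and a nonnegative factor $\max(A,B)-t$; on the complement $S=-1$ and the factor is negative. Hence the two sides of \eqref{eq:intro-sign-bound} coincide, which is exactly the equality condition stated after \eqref{eq:signed-objective}.

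There is no real obstacle. The one point needing care is that the copula in Lemma~\ref{lem:representations} reproduces the joint law of $(A,B,S)$, not merely the marginals, so the expectation of a function of $(A,B,S)$ is preserved; this is what the converse asserts. Measurability of $S$ is immediate because $\{\max(A,B)\ge t\}$ is a Borel event. Ties at $\max(A,B)=t$ are harmless since the integrand vanishes there.
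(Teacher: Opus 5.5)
Your proposal is correct and follows the paper's proof: you set $S=1$ on $\{\max(A,B)\ge t\}$ and $S=-1$ otherwise, invoke the converse part of Lemma~\ref{lem:representations}, and note that $S\{\max(A,B)-t\}=|\max(A,B)-t|$ pointwise, which gives equality.
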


\begin{proof}
Set $S=1$ on $\{\max(A,B)\ge t\}$ and $S=-1$ on its complement, and invoke the converse part of Lemma~\ref{lem:representations}.
The two margins remain uniform, while $S\{\max(A,B)-t\}=|\max(A,B)-t|$ holds almost surely.
\end{proof}

\subsection{The magnitude transport problem and its dual}\label{sec:duality}

Every $\pi\in\couplings$ determines a copula through its distribution function, and each copula likewise gives an element of $\couplings$.
For $t>0$, introduce the transport value
\begin{equation}\label{eq:primal}
\mathsf V(t)\coloneqq\max_{\pi\in\couplings}\int_{\I^2}c_t(x,y)\de\pi(x,y).
\end{equation}
The maximum is attained because $\couplings$ is weakly compact and $c_t$ is continuous. See \citet[Thm.~4.1 and its proof]{villani2009optimal}, with the theorem applied to the minimization cost $-c_t$.
It follows from \eqref{eq:signed-objective} and Lemma~\ref{lem:sign-attainment} that
\begin{equation}\label{eq:support-function}
	\max_{C\in\CC}\bigl\{\rho(C)-\tfrac32t\,\gamma(C)\bigr\}=3\,\mathsf V(t),
\end{equation}
so solving \eqref{eq:primal} determines the support function of $\Om$ in direction $(1,-\tfrac32t)$.
When the axes are ordered as $(\gamma,\rho)$, an upper bound $b$ on this functional becomes the line $\rho\le b+3t\gamma/2$.
Equality for a copula means that this line supports the region at the corresponding coefficient pair.
In the maximization convention, \citet[Thm.~5.10]{villani2009optimal} supplies the Kantorovich dual of \eqref{eq:primal}. The symmetry of $c_t$ reduces it to
\begin{equation}\label{eq:dual}
	\mathsf D(t)\coloneqq\inf\Bigl\{2\int_0^1f(x)\de x:\ f\in C(\I),\ f(x)+f(y)\ge c_t(x,y)\ \text{for all }(x,y)\in\I^2\Bigr\}.
\end{equation}
We call any continuous $f$ obeying the constraint in \eqref{eq:dual} a \emph{feasible potential}, and associate with it the set
\[
	\Gamma_f\coloneqq\bigl\{(x,y)\in\I^2:f(x)+f(y)=c_t(x,y)\bigr\}
\]
called its \emph{contact set}.
It suffices to use one potential. Indeed, if continuous functions $\varphi$ and $\psi$ satisfy $\varphi(x)+\psi(y)\ge c_t(x,y)$, then symmetry of $c_t$ makes $f=(\varphi+\psi)/2$ feasible, and $2\int_0^1f(x)\de x=\int_0^1\varphi(x)\de x+\int_0^1\psi(y)\de y$.

\begin{lemma}[Weak duality and optimality via the contact set]\label{lem:duality}
	Fix $t>0$.
	\begin{enumerate}[(i)]
		\item For any feasible potential $f$ and any $\pi\in\couplings$, one has $\int_{\I^2}c_t(x,y)\de\pi(x,y)\le2\int_0^1f(x)\de x$. Hence $\mathsf V(t)\le\mathsf D(t)$.
		\item Suppose that $\pi\in\couplings$ is supported on the contact set $\Gamma_f$ of a feasible potential $f$. Then $\pi$ and $f$ solve \eqref{eq:primal} and \eqref{eq:dual}, respectively, and $\mathsf V(t)=\mathsf D(t)=2\int_0^1f(x)\de x$.
	\end{enumerate}
\end{lemma}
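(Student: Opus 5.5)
The plan is the standard weak-duality argument for Kantorovich problems, specialised to a single symmetric potential.

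For (i), fix a feasible potential $f$ and $\pi\in\couplings$. We integrate the pointwise constraint $c_t(x,y)\le f(x)+f(y)$ against $\pi$. Since $f$ is continuous on the compact square, it is bounded, so all integrals are finite. The cost $c_t$ is continuous and bounded as well, so no integrability issues arise. Because both marginals of $\pi$ equal $\lambda$, we have
\[
\int_{\I^2} f(x)\de\pi(x,y)=\int_0^1 f(x)\de x=\int_{\I^2} f(y)\de\pi(x,y).
\]
This gives $\int c_t\de\pi\le 2\int_0^1 f\de x$. Taking the supremum over $\pi$ and the infimum over feasible $f$ yields $\mathsf V(t)\le\mathsf D(t)$. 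Here $\mathsf V(t)$ is a maximum by the compactness remark preceding \eqref{eq:primal}.

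For (ii), suppose $\pi(\Gamma_f)=1$. The set $\Gamma_f$ is closed, being the zero set of the continuous function $f(x)+f(y)-c_t(x,y)$. Hence "supported on" may be read either as $\operatorname{supp}\pi\subseteq\Gamma_f$ or as $\pi(\Gamma_f)=1$; the two are equivalent here. On $\Gamma_f$ the constraint is an equality, so integrating exactly as in (i) gives
\[
\int c_t\de\pi=\int (f(x)+f(y))\de\pi=2\int_0^1 f\de x.
\]
Combining this with (i), for every $\pi'\in\couplings$ and every feasible $f'$ we get the chain
\[
\int c_t\de\pi'\le\mathsf V(t)\le\mathsf D(t)\le 2\int f\de x=\int c_t\de\pi\le\mathsf V(t).
\]
All these quantities are therefore equal. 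It follows that $\pi$ attains the maximum in \eqref{eq:primal}, $f$ attains the infimum in \eqref{eq:dual}, and $\mathsf V(t)=\mathsf D(t)=2\int_0^1 f\de x$.

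There is no real obstacle in this lemma. The only points needing care are the boundedness and measurability needed to split the integral of $f(x)+f(y)$ via the marginals, and the closedness of $\Gamma_f$, so that the support condition is meaningful. Both follow directly from continuity on the compact square $\I^2$.
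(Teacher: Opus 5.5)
Your proposal is correct and follows the paper's proof: you integrate the constraint against $\pi$ using the uniform marginals to get (i), then use equality on $\Gamma_f$ and the chain $\mathsf V(t)\ge 2\int_0^1 f\,\mathrm{d}x\ge\mathsf D(t)\ge\mathsf V(t)$ for (ii). Your extra remarks on boundedness and on the closedness of $\Gamma_f$, which makes the two readings of ``supported on'' equivalent, are accurate and merely make explicit what the paper leaves implicit.
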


\begin{proof}
	Uniformity of both marginals of $\pi$ gives $\int_{\I^2}\{f(x)+f(y)\}\de\pi(x,y)=2\int_0^1f(x)\de x$. Integrating the constraint in \eqref{eq:dual} against $\pi$ now proves (i).
	When $\pi(\I^2\setminus\Gamma_f)=0$, the corresponding integral is an equality, so $\int_{\I^2}c_t(x,y)\de\pi(x,y)=2\int_0^1f(x)\de x$. Part~(i) then yields $\mathsf V(t)\ge2\int_0^1f(x)\de x\ge\mathsf D(t)\ge\mathsf V(t)$, forcing equality throughout.
\end{proof}

\begin{remark}[Strong duality]\label{rem:strong-duality}
	Continuity of $c_t$ on the compact square allows the Kantorovich duality theorem, see \citet[Thm.~5.10]{villani2009optimal}, to give $\mathsf V(t)=\mathsf D(t)$ for all $t>0$. The preceding symmetrization then yields the same conclusion for the one-potential formulation \eqref{eq:dual}.
	At $t=t_\theta$, we instead verify the equality directly by pairing a feasible potential with a coupling carried by its contact set and applying Lemma~\ref{lem:duality}(ii).
	In the rho--footrule problem studied by \citet{ansari2026exact}, a linear moment is held fixed and constrained Kantorovich duality is used, see \citet[Thm.~2.1]{zaev2015monge}. The present formulation fixes the multiplier $t$ and incorporates the gamma constraint into the cost, which leaves only unconstrained duality.
\end{remark}

\subsection{The auxiliary rho--footrule problem}\label{sec:input}

We need the primal--dual solution of \eqref{eq:auxiliary-problem} in the
form stated next. The case $\theta>1$ comes from \citet{ansari2026exact},
and every numbered reference to that work below concerns
arXiv:2608.20176v1. For $0<\theta\le1$, we establish the half-shift solution
here. The later gluing construction uses four features: the dual inequality,
equality on the optimizer, normalization at the endpoint, and the derivative
estimate included in the lemma.
The potential has Lipschitz constant
\begin{equation}\label{eq:lipschitz-parameter}
w_\theta=\begin{cases}s-1,&0<\theta\le1,\\ |\delta|,&\theta>1.\end{cases}
\end{equation}

\begin{lemma}[The rho--footrule optimizer and its potential]\label{lem:input}
	For each $\theta>0$, there are a coupling $\pi_\theta$ of uniform random variables $(U_0,V_0)$ on $\I$ and a Lipschitz function $h_\theta\colon\I\to\mathbb{R}$ for which
	\begin{equation}\label{eq:input-moments}
		\E|U_0-V_0|=m_\theta,\qquad\E|U_0-V_0|^2=q_\theta,
	\end{equation}
	\begin{equation}\label{eq:input-dual}
		h_\theta(u)+h_\theta(v)\le(u-v)^2-s|u-v|\quad\text{for all }u,v\in\I,
		\qquad\text{with equality $\pi_\theta$-almost surely,}
	\end{equation}
	and
	\begin{equation}\label{eq:input-endpoint}
		h_\theta(0)=c_\theta,\qquad|h_\theta'|\le w_\theta\ \text{almost everywhere},
	\end{equation}
	with $m_\theta,q_\theta,c_\theta$ specified by \eqref{eq:large-parameters}--\eqref{eq:moments} and $w_\theta$ defined in \eqref{eq:lipschitz-parameter}.
\end{lemma}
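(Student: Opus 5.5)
For $0<\theta\le1$ I will construct the pair $(\pi_\theta,h_\theta)$ explicitly. For $\theta>1$ I will import it from \citet{ansari2026exact} and check that the normalization matches the parameters defined here.

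\emph{Half-shift regime.} Let $0<\theta\le1$, so that $s=1/\theta\ge1$ and $w_\theta=s-1\ge0$. Take $U_0\sim\Unif(0,1)$ and $V_0=U_0+\tfrac12\pmod 1$. This is measure preserving, so $\pi_\theta\in\couplings$. Moreover $|U_0-V_0|=\tfrac12$ identically, which gives $m_\theta=\tfrac12$ and $q_\theta=\tfrac14$ as in \eqref{eq:large-parameters}.

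For the potential I propose
\[
h_\theta(u)=-\tfrac18-(s-1)\bigl|u-\tfrac12\bigr|.
\]
This function is Lipschitz with $|h_\theta'|\le s-1=w_\theta$ almost everywhere, and $h_\theta(0)=-\tfrac18-\tfrac{s-1}2=\tfrac38-\tfrac s2=c_\theta$. It therefore meets \eqref{eq:input-endpoint}.

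Next I verify the dual inequality \eqref{eq:input-dual}. Put $d=|u-v|$. The triangle inequality gives $|u-\tfrac12|+|v-\tfrac12|\ge d$, and since $s-1\ge0$,
\[
h_\theta(u)+h_\theta(v)\le-\tfrac14-(s-1)d .
\]
The difference between the right-hand side of \eqref{eq:input-dual} and this bound is
\[
d^2-sd+\tfrac14+(s-1)d=\bigl(d-\tfrac12\bigr)^2\ge0,
\]
so the inequality holds on all of $\I^2$.

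For equality on the support, note that if $v=u\pm\tfrac12$ then $u$ and $v$ lie on opposite sides of $\tfrac12$. Hence $|u-\tfrac12|+|v-\tfrac12|=\tfrac12=d$, and both inequalities above are equalities. Both sides then equal $\tfrac14-\tfrac s2$, which gives equality $\pi_\theta$-almost surely.

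\emph{Nontrivial regime $\theta>1$.} Here I will invoke the primal–dual pair of \citet{ansari2026exact} for the problem \eqref{eq:auxiliary-problem} with coefficient $s\in(2R,2L]$. The required moment values \eqref{eq:moments} follow from the stated law of the distance: an atom of mass $p$ at $\ell$ plus density $2N(N+1)$ on the interval between $\ell$ and $\ell+\delta$. The mean and second moment are obtained by direct integration, using $p=1-2N(N+1)|\delta|$.

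The dual potential in that paper may be written with a different sign convention, as a maximization of $s|u-v|-(u-v)^2$. It may also be split into two functions $\varphi,\psi$. I will convert it to the one-potential form by negation and by the symmetrization $(\varphi+\psi)/2$ from Section~\ref{sec:duality}, and fix the additive constant using the complementary-slackness identity. Then $h_\theta(0)=c_\theta$ is read off by evaluating at $u=0$. At that point the partner under $\pi_\theta$ lies at distance $\ell$ (on the atom part), and the stated formula $(\ell^2-s\ell-\ell p\delta)/2$ should emerge.

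\emph{Main obstacle.} The hard step is the derivative bound $|h_\theta'|\le|\delta|$ for $\theta>1$. It has to be extracted from the explicit piecewise form of the potential in the cited work. I would first try the route that the cited potential is affine on each cell of its piecewise description, with slopes $\pm\delta$ or $0$. Failing that, I would bound $h_\theta'$ through the first-order contact condition $h_\theta'(u)=\partial_u k(u,T(u))$ along the transport map $T$. Here $k(u,v)=(u-v)^2-s|u-v|$, so its derivative in $u$ is $2(u-v)-s\,\sgn(u-v)$. On the support $|u-v|\in[\ell,\ell+\delta]$, and so this derivative has magnitude $|2|u-v|-s|\le|2\ell+2\delta-s|=|\delta|$.
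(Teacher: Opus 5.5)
Your half-shift case matches the paper's argument. Your $-\tfrac18-(s-1)\bigl|u-\tfrac12\bigr|$ is exactly the paper's $\tfrac38-\tfrac s2+(s-1)\min(u,1-u)$, and your triangle-inequality step is the same estimate as $\min(u,1-u)+\min(v,1-v)\le1-d$. For $\theta>1$ you also follow the paper in importing everything from \citet{ansari2026exact}, but two of the verifications you announce are not carried out.

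First, the cited optimizer $\pi_x$ is indexed by the footrule value $x$, not by $s$. You need to identify its multiplier $\theta(x)$ with $s$: it equals $2L-v$ on the left half and $2R+v$ on the right half of the $N$-th footrule interval, with $v=|\delta|$ and $p=1-2N(N+1)v$. This shows every $s\in(0,1)$ is realized. You should also note that the two branches agree at $s=L+R$, because $p=0$ there.

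Second, your route to $h_\theta(0)=c_\theta$ does not produce the number. Even granting that $0$ is matched at distance $\ell$, contact only gives $h(0)+h(\ell)=\ell^2-s\ell$. The correction $-\ell p\delta/2$ in $c_\theta$ is exactly the information about $h(\ell)-h(0)$ that this relation lacks. The paper instead takes $h_\theta=g_x$ from Definition~5.1 of the cited work, where the endpoint value $g_x(0)=c_x=c_\theta$ is recorded. That potential is already a single potential in the minimization convention: the inequality holds on $\{u\le v\}$ by Lemma~5.5 and extends by symmetry, and contact holds by Lemma~5.4. So no negation, symmetrization or constant-fixing is needed.

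Your fallback for the derivative bound, by contrast, is sound, and it is a genuinely different route from the paper, which quotes Lemma~5.3 of the cited work ($s$-periodicity, $C^1$, $|g_x'|\le v$). It needs no transport map.
\begin{enumerate}[(i)]
\item The contact set of the continuous potential is closed and has full $\pi_\theta$-mass, so it contains $\operatorname{supp}\pi_\theta$.
\item The first projection of $\operatorname{supp}\pi_\theta$ is compact with full Lebesgue measure, hence equals $\I$.
\item By the distance law (Lemma~4.5 there), every $u$ therefore has a contact partner $v$ with $|u-v|$ between $\ell$ and $\ell+\delta$; in particular $v\neq u$.
\item At any $u\in(0,1)$ where $h$ is differentiable, $u'\mapsto(u'-v)^2-s|u'-v|-h(u')$ is minimal at $u'=u$. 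Hence $h'(u)=2(u-v)-s\sgn(u-v)$, and $|h'(u)|=\bigl|2|u-v|-s\bigr|\le|\delta|$ because $s=2\ell+\delta$.
\end{enumerate}
Almost-everywhere differentiability follows from Lipschitz continuity. That in turn follows from $h(u)=\min_v\{(u-v)^2-s|u-v|-h(v)\}$, an infimum of uniformly Lipschitz functions. This argument removes the reliance on the periodicity/regularity lemma, but it does not remove the need for the explicit formula to obtain the endpoint value.
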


\begin{proof}
	Begin with $\theta>1$.
	The optimizer in \citet[Sec.~4]{ansari2026exact} is indexed by the footrule value $x$. If $N$, $L$, and $R$ are chosen as in \eqref{eq:small-parameters}, then the multiplier called $\theta(x)$ in that paper equals the present $s=1/\theta$: on the left half of the $N$-th footrule interval it is $s=2L-v$, and on the right half it is $s=2R+v$, where $v\in[0,L-R]$ and $p=1-2N(N+1)v$.
	Consequently, each $\theta\in(1,\infty)$ determines a unique $x$, with $v=|\delta|$ and with $\ell=L$ or $\ell=R$ according to the branch in \eqref{eq:small-parameters}. At $s=L+R$, formula \eqref{eq:small-parameters} chooses $\ell=L$ although $x$ belongs to the right half. At that point, $p=0$ on both branches, so they produce an identical coupling and potential.
	Take $\pi_\theta$ to be the coupling $\pi_x$ defined in that work. Its marginals are uniform by Lemmas~3.3 and~4.3 therein.
	According to Lemma~4.5 of that paper, $|U_0-V_0|$ under $\pi_\theta$ has distribution
	\[
		p\,\delta_\ell+2N(N+1)\,\lambda\big|_{[\min(\ell,\ell+\delta),\,\max(\ell,\ell+\delta)]},
	\]
	where $\delta_\ell$ is the unit point mass at $\ell$.
	The continuous component has mass $2N(N+1)|\delta|=1-p$. Integrating the first two powers of the distance against this law yields \eqref{eq:input-moments}, with the values in \eqref{eq:moments}.
	Now choose as $h_\theta$ the potential $g_x$ from Definition~5.1 of that paper.
	Its endpoint value is $g_x(0)=c_x$, with $c_x=c_\theta$ as in \eqref{eq:moments}. Lemma~5.3 shows that it is $s$-periodic and continuously differentiable, with a derivative whose absolute value is at most $v$. Lemmas~5.4 and~5.5 establish equality in \eqref{eq:input-dual} $\pi_\theta$-almost surely and the inequality on $\{u\le v\}$, which extends to $\I^2$ by symmetry.

	For $0<\theta\le1$, use the half shift $V_0=U_0+\tfrac12\pmod1$. Since $|U_0-V_0|=\tfrac12$ almost surely, \eqref{eq:input-moments} follows with the parameters in \eqref{eq:large-parameters}. Define
	\begin{equation}\label{eq:halfshift-dual}
		h_\theta(u)=\tfrac38-\tfrac s2+(s-1)\min(u,1-u).
	\end{equation}
	This gives $h_\theta(0)=c_\theta$. Moreover, $|h_\theta'|=s-1=w_\theta$.
	Writing $d=|u-v|$, the terms $\min(u,1-u)$ and $\min(v,1-v)$ sum to at most $1-d$, and therefore
	\[
		h_\theta(u)+h_\theta(v)\le\tfrac34-s+(s-1)(1-d)=d^2-sd-(d-\tfrac12)^2\le d^2-sd.
	\]
	For the half shift, $d=\tfrac12$ and $\min(u,1-u)+\min(v,1-v)=\tfrac12$, making both inequalities equalities.
\end{proof}

\begin{lemma}[The half-shift regime]\label{lem:halfshift-range}
For $\theta>0$, the half shift $V_0=U_0+1/2\pmod1$ is an optimizer of \eqref{eq:auxiliary-problem} exactly when $0<\theta\le1$.
\end{lemma}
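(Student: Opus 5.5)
The claim has two directions. For $0<\theta\le1$ (that is, $s\ge1$), optimality is already contained in the proof of Lemma~\ref{lem:input}. The potential $h_\theta$ in \eqref{eq:halfshift-dual} satisfies the dual inequality \eqref{eq:input-dual} with equality on the half-shift support. Integrating over any coupling $\pi\in\couplings$, with both marginals uniform, then gives
\[
\E_\pi\bigl[|U_0-V_0|^2-s|U_0-V_0|\bigr]\ \ge\ 2\int_0^1h_\theta(u)\de u,
\]
and the half shift attains this bound. This is the minimization analogue of Lemma~\ref{lem:duality}(ii), so I would cite that argument verbatim. The only point to check is that $h_\theta$ is feasible for every $s\ge1$. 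This holds because the coefficient $s-1$ of $\min(u,1-u)$ is nonnegative, which is exactly what the bound $\min(u,1-u)+\min(v,1-v)\le1-d$ requires.

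For the converse, suppose $\theta>1$, so $s<1$. I would exhibit a coupling with strictly smaller objective than the half shift, whose value is $\tfrac14-\tfrac s2$. The simplest competitor is a perturbation that lowers the mean distance slightly. Take the shift $V_0=U_0+\beta\pmod1$ for $\beta$ close to $\tfrac12$. The distance equals $\beta$ with probability $1-\beta$ and $1-\beta$ with probability $\beta$. Its mean is $2\beta(1-\beta)$ and its second moment is $\beta(1-\beta)$, so the objective is $\beta(1-\beta)(1-2s)$. Setting $u=\beta(1-\beta)\le\tfrac14$, this objective is $u(1-2s)$. When $s>\tfrac12$ it decreases as $u$ decreases, so it beats the value $\tfrac14(1-2s)=\tfrac14-\tfrac s2$ once $u<\tfrac14$. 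This already settles $\tfrac12<s<1$.

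For $s\le\tfrac12$ the shifts do not help, so I would compare directly with the optimal value from Lemma~\ref{lem:input}. By that lemma the optimizer $\pi_\theta$ has value $q_\theta-s m_\theta$, where
\[
m_\theta=\ell+N(N+1)\delta|\delta|,\qquad q_\theta=\ell(2m_\theta-\ell)+\tfrac23N(N+1)|\delta|^3 .
\]
It suffices to show $q_\theta-sm_\theta<\tfrac14-\tfrac s2$.

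A cleaner uniform route avoids this algebra: the optimal values $2\int h_\theta$ are available, and the half shift is optimal iff its value equals $\mathsf{min}$. For $s<1$, I would argue via the Lipschitz structure. If the half shift were optimal, any optimal potential would have to satisfy $h(u)+h(u+\tfrac12)=\tfrac14-\tfrac s2$ on the support, while feasibility near the diagonal requires $2h(u)\le0$. Rather than pursue this, I would commit to the explicit comparison. On the integer grid $\theta=N\ge2$ one has $\delta=0$, $m=q^{1/2}=\ell=\tfrac{s}{2}$, and the value is $-\tfrac{s^2}{4}$. This is less than $\tfrac14-\tfrac s2$ because $\bigl(\tfrac s2-\tfrac12\bigr)^2>0$.

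\textbf{Main obstacle.} The general non-integer $\theta$ with $s\le\tfrac12$ is the step I expect to be hardest. I would bound the objective there using the fixed competitor $V_0=U_0+\tfrac s2\pmod 1$-type couplings, or simply note that the minimum value is nonincreasing in nothing useful, and instead compute $q_\theta-sm_\theta-(\tfrac14-\tfrac s2)$ as a polynomial in $\delta$. Its negativity would follow from $\ell\le\tfrac14$ and $|\delta|\le L-R$.
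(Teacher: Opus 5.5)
Your first direction ($0<\theta\le1$) is fine and matches the paper. The converse, however, has a genuine gap caused by a sign error. For the shift $V_0=U_0+\beta\pmod 1$ you correctly obtain the objective $u(1-2s)$ with $u=\beta(1-\beta)\le\tfrac14$. But when $\tfrac12<s<1$ the factor $1-2s$ is negative, so
\[
u(1-2s)-\tfrac14(1-2s)=\bigl(u-\tfrac14\bigr)(1-2s)>0\qquad\text{for }u<\tfrac14 .
\]
Every nontrivial shift is therefore \emph{worse} than the half shift in exactly the range you claim to have settled. In fact every shift has $\E|U_0-V_0|^2=\tfrac12\E|U_0-V_0|$, so its objective is $(\tfrac12-s)\E|U_0-V_0|$. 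For $s>\tfrac12$ this is minimized by the largest mean distance, which is the half shift itself.

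You have the two regimes reversed. For $s<\tfrac12$ the shifts do help: the identity coupling has value $0<\tfrac14-\tfrac s2$. Together with your computation at $\theta=2$, this disposes of $s\le\tfrac12$ easily. The real difficulty is $\tfrac12<s<1$, i.e.\ $1<\theta<2$, and your proposal leaves it unresolved, so non-optimality of the half shift is not established there.

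The missing idea is visible from the identity $d^2-sd=(d-\tfrac12)^2+(1-s)(d-\tfrac12)+\tfrac14-\tfrac s2$. A coupling beats the half shift if and only if
\[
\E\bigl(|U_0-V_0|-\tfrac12\bigr)^2<(1-s)\bigl(\tfrac12-\E|U_0-V_0|\bigr).
\]
Shifts have the left side equal to $\tfrac12(\tfrac12-\E|U_0-V_0|)$, so they fail once $s\ge\tfrac12$. For $s$ near $1$ you need a distance law clustered just below $\tfrac12$, whose variance about $\tfrac12$ is of smaller order than the drop in the mean.

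The paper supplies such a coupling. It takes the $N=1$ rho--footrule optimizer $\pi_\vartheta$ from Lemma~\ref{lem:input} at the auxiliary parameter $\vartheta=1/(1-\varepsilon)$, with $0<\varepsilon<\min\{\tfrac14,1-s\}$. Then $\ell=\tfrac12$, $\delta=-\varepsilon$, $m_\vartheta=\tfrac12-2\varepsilon^2$ and $q_\vartheta=\tfrac14-2\varepsilon^2+\tfrac43\varepsilon^3$. Evaluating at the original multiplier $s$ gives
\[
q_\vartheta-s\,m_\vartheta-\bigl(\tfrac14-\tfrac s2\bigr)=-2(1-s)\varepsilon^2+\tfrac43\varepsilon^3<0 .
\]
Decoupling the competitor's parameter from $s$ makes this work uniformly for all $s<1$, with no case split over $N$ or the branches of \eqref{eq:small-parameters}. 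Your fallback of comparing with $\pi_\theta$ at the same $\theta$ can also be made to work. It would, however, require separate algebra on each branch, including both branches for $N=1$, and you did not carry that out.
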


\begin{proof}
When $0<\theta\le1$, optimality follows from the potential \eqref{eq:halfshift-dual} and its equality on the half shift, established in Lemma~\ref{lem:input}.
Now suppose $\theta>1$, so $s<1$. Select $0<\varepsilon<\min\{1/4,1-s\}$
and set $\vartheta=1/(1-\varepsilon)$. For the coupling $\pi_\vartheta$ supplied by
Lemma~\ref{lem:input}, we have $N=1$, $\ell=1/2$ and $\delta=-\varepsilon$, and hence
\[
m_\vartheta=\tfrac12-2\varepsilon^2,\qquad
q_\vartheta=\tfrac14-2\varepsilon^2+\tfrac43\varepsilon^3.
\]
At multiplier $s$, the value for $\pi_\vartheta$ minus the half-shift value is
\[
q_\vartheta-s m_\vartheta-(\tfrac14-\tfrac s2)
=-2(1-s)\varepsilon^2+\tfrac43\varepsilon^3<0.
\]
Hence the half shift cannot be optimal when $\theta>1$.
\end{proof}

The auxiliary optimizer therefore changes precisely at $\theta=1$, which
corresponds to the junction $g_*=G(1)$. The next estimates constrain the
splitting parameters in each regime.

\begin{lemma}[Parameter inequalities]\label{lem:parameter-checks}
	For all $\theta>0$, the inequalities
	\[
		c_\theta<0,\qquad1+2\theta^2c_\theta\ge\theta^2w_\theta^2,\qquad
		\frac{1+\theta w_\theta}{2}\le\alpha_\theta<1,
	\]
	hold and imply
	\begin{equation}\label{eq:split-bounds}
		\frac{t_\theta}{2}\le a_\theta<t_\theta.
	\end{equation}
\end{lemma}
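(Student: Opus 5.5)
The plan is to show that the three displayed inequalities reduce to two elementary facts about $c_\theta$, and then to check those two facts separately in the two regimes $0<\theta\le1$ and $\theta>1$ of Lemma~\ref{lem:input}.

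\textbf{Reduction to two facts.} Write $D=1+2\theta^2c_\theta$, so that $\alpha_\theta=(1+\sqrt D)/2$ by \eqref{eq:split}. Since $w_\theta\ge0$, we have the equivalences
\[
\alpha_\theta\ge\tfrac{1+\theta w_\theta}{2}\iff\sqrt D\ge\theta w_\theta\iff D\ge\theta^2w_\theta^2,
\]
and, provided $D\ge0$,
\[
\alpha_\theta<1\iff D<1\iff c_\theta<0.
\]
The second displayed inequality of the lemma gives $D\ge\theta^2w_\theta^2\ge0$, so the square root is well defined. Hence the lemma follows from the two claims $c_\theta<0$ and $D\ge\theta^2w_\theta^2$. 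For \eqref{eq:split-bounds}, note that $a_\theta=\alpha_\theta t_\theta$ with $t_\theta>0$ and $\tfrac12\le\tfrac{1+\theta w_\theta}{2}\le\alpha_\theta<1$; this gives $t_\theta/2\le a_\theta<t_\theta$.

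\textbf{Half-shift regime $0<\theta\le1$.} Here $s=1/\theta\ge1$, so \eqref{eq:large-parameters} gives $c_\theta=\tfrac38-\tfrac s2\le-\tfrac18<0$. Moreover $\theta w_\theta=\theta(s-1)=1-\theta$, hence
\[
D-\theta^2w_\theta^2=\Bigl(1-\theta+\tfrac34\theta^2\Bigr)-(1-\theta)^2=\theta-\tfrac14\theta^2>0 .
\]

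\textbf{Regime $\theta>1$.} Divide both claims by $s^2=\theta^{-2}$ and substitute $s=2\ell+\delta$ into \eqref{eq:moments}. A direct expansion gives
\[
2c_\theta=-\ell\bigl(\ell+(1+p)\delta\bigr),\qquad s^2+2c_\theta-\delta^2=\ell\bigl(3\ell+(3-p)\delta\bigr).
\]
So it suffices to show $\ell+(1+p)\delta>0$ and $3\ell+(3-p)\delta\ge0$. The key observation from \eqref{eq:small-parameters} is the sign of $\delta$ on each branch.

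\begin{enumerate}[(i)]
\item If $\ell=R$, then $\delta=s-2R>0$. Since $p\in[0,1]$, both expressions are then positive.
\item If $\ell=L$, then $\delta=s-2L\le0$. Put $v=|\delta|\le L-R=1/(2N(N+1))$ and recall $p=1-2N(N+1)v$.
 \begin{itemize}
 \item First expression: $(1+p)v=2v-2N(N+1)v^2$. This is increasing on $[0,L-R]$ and reaches its maximum $1/(2N(N+1))$ at the endpoint. That value is strictly smaller than $L=1/(2N)$, so $\ell+(1+p)\delta>0$.
 \item Second expression: $(3-p)v\le3v<3L$, so $3\ell+(3-p)\delta>0$.
 \end{itemize}
\end{enumerate}

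\textbf{Expected difficulty.} The argument is elementary throughout. The only step needing care is the sign bookkeeping of $\delta$ across the two branches of \eqref{eq:small-parameters}. In particular, at the junction $s=L+R$ the formula chooses $\ell=L$, so $\delta=R-L<0$ and $p=0$. This point is covered by case (ii) with $v=L-R$, where the first expression is still strictly positive.
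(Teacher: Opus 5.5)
Your proof is correct and follows essentially the same route as the paper. Both use the half-shift computation, the branch identities $2c_\theta=-\ell\bigl(\ell+(1+p)\delta\bigr)$ and $s^2+2c_\theta-\delta^2=\ell\bigl(3\ell+(3-p)\delta\bigr)$, and the direct passage to the bounds on $\alpha_\theta$ and $a_\theta$; the only difference is that you bound $(1+p)|\delta|\le L-R<L$ uniformly by maximizing the quadratic, whereas the paper treats $N=1$ and $N\ge2$ separately.
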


\begin{proof}
	First consider the branch $\ell=L$, and write $v=-\delta\in[0,L-R]$. Then $s=2L-v$, $p=1-2N(N+1)v\in[0,1]$, and $v\le L/(N+1)$.
	In this notation,
	\[
		2c_\theta=-L^2+Lv(1+p),\qquad
		s^2+2c_\theta-v^2=3L^2-Lv(3-p)\ge3L^2-\frac{3L^2}{N+1}>0.
	\]
	We also have $v(1+p)<L$. For $N\ge2$, this follows from $v(1+p)\le2v\le2L/(N+1)$. When $N=1$, it follows from $v(1+p)=v(2-4v)\le\tfrac14<L$.
	On the branch $\ell=R$, set $v=\delta\in[0,L-R)$. Here $s=2R+v$ and $p\in(0,1]$, while
	\[
		2c_\theta=-R^2-Rv(1+p)<0,\qquad
		s^2+2c_\theta-v^2=3R^2+Rv(3-p)>0.
	\]
	If $0<\theta\le1$, formula \eqref{eq:large-parameters} instead yields $c_\theta<0$ and $s^2+2c_\theta-(s-1)^2=s-\tfrac14>0$.
	Multiplying $s^2+2c_\theta\ge w_\theta^2$ by $\theta^2$ proves the middle inequality in the statement.
	The estimates for $\alpha_\theta$ are immediate from \eqref{eq:split}, together with $c_\theta<0$.
	The relation $a_\theta=\alpha_\theta t_\theta$ then gives \eqref{eq:split-bounds}.
\end{proof}

\subsection{The extremal copulas}\label{sec:extremizers}

We place $\pi_\theta$ in the upper magnitude interval, couple the lower
interval along the diagonal, and assign signs according to the following rule.

\begin{definition}[The family $C_\theta$]\label{def:extremizers}
	Fix $\theta>0$, write $a=a_\theta$ and $z=z_\theta$, and take $\pi_\theta$ from Lemma~\ref{lem:input}.
	Define $(A,B,S)$ by the following mixture. With probability $a$, choose $A=B$ uniformly from $[0,a]$ and put $S=-1$. With the remaining probability $z=1-a$, sample $(U_0,V_0)\sim\pi_\theta$, set $A=a+zU_0$ and $B=a+zV_0$, and put $S=1$.
	For an independent fair sign $\varepsilon$, define $C_\theta$ as the copula of
	\[
		U=\frac{1+\varepsilon A}{2},\qquad V=\frac{1+\varepsilon SB}{2}.
	\]
	Complete the family at its endpoints by setting $C_0=W$ and $C_\infty=M$.
\end{definition}

Within either component, each magnitude marginal is uniform on the corresponding interval, and the mixture probabilities $a$ and $z$ coincide with those interval lengths. Both magnitudes are consequently uniform on $\I$, so Lemma~\ref{lem:representations} shows that $C_\theta$ is a copula.
In the lower interval, the choices $A=B$ and $S=-1$ produce the central antidiagonal segment $U+V=1$. In the upper interval, $S=1$ produces two corner blocks of mass $z/2$ each. The sign $\varepsilon$ selects the corner and simultaneously reflects both coordinates in the lower corner.

Denote the copula of $\pi_\theta$ by $B_\theta$, and write $\widehat B_\theta(u,v)=u+v-1+B_\theta(1-u,1-v)$ for its survival copula.
The copula $C_\theta$ is then the ordinal sum of $\widehat B_\theta$, $W$ and $B_\theta$ over the successive intervals $[0,z/2]$, $[z/2,1-z/2]$ and $[1-z/2,1]$, of lengths $z/2$, $a$ and $z/2$.
More explicitly, for a component $D$ assigned to $[b,b+\ell_0]$, one has $C_\theta(u,v)=b+\ell_0D((u-b)/\ell_0,(v-b)/\ell_0)$ inside that square and $C_\theta(u,v)=\min(u,v)$ whenever the coordinates belong to distinct blocks.
For the theory of ordinal sums, see \citet{durante2015principles}. Examples of their role in exact-region constructions appear in \citet{bukovsek2026exact,orendaylares2026exact}.
Figure~\ref{fig:extremizers} displays the resulting block sizes and supports.

\begin{proposition}[Moments of $C_\theta$]\label{prop:moments}
	The relations $\gamma(C_\theta)=G(\theta)$ and $\rho(C_\theta)=P(\theta)$ are valid for every $\theta\in[0,\infty]$.
\end{proposition}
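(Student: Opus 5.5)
We compute both coefficients from the signed-moment representation \eqref{eq:signed-moments} of Lemma~\ref{lem:representations}, applied to the triple $(A,B,S)$ of Definition~\ref{def:extremizers}. By the converse part of that lemma, $C_\theta$ has exactly this triple as its magnitude--sign law. The endpoints are immediate: $C_0=W$ and $C_\infty=M$ give $(\gamma,\rho)=(-1,-1)$ and $(1,1)$, matching the conventions $(G(0),P(0))$ and $(G(\infty),P(\infty))$. So fix $\theta\in(0,\infty)$, write $a=a_\theta$, $z=z_\theta=1-a$, and split every expectation according to the two mixture components.

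For gamma, we use $\gamma(C_\theta)=2\,\E[S\min(A,B)]$.
\begin{itemize}
\item On the central component (probability $a$), $A=B\sim\Unif[0,a]$ and $S=-1$. The contribution is $2\cdot a\cdot(-a/2)=-a^2$.
\item On the corner component (probability $z$), $S=1$, and
\[
\min(A,B)=a+z\min(U_0,V_0)=a+\tfrac z2\bigl(U_0+V_0-|U_0-V_0|\bigr).
\]
Since $\E U_0=\E V_0=\tfrac12$, its expectation is $a+\tfrac z2(1-m_\theta)$, using \eqref{eq:input-moments}. The contribution is $2z\bigl(a+\tfrac z2-\tfrac z2m_\theta\bigr)=2az+z^2-z^2m_\theta$.
\end{itemize}
Adding the two contributions gives $-a^2+2az+z^2-z^2m_\theta$. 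With $a+z=1$ we have $2az+z^2=1-a^2$, so the sum equals $1-2a^2-z^2m_\theta=G(\theta)$, as in \eqref{eq:param-boundary}.

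For rho, we use $\rho(C_\theta)=3\,\E[SAB]$.
\begin{itemize}
\item The central component contributes $3\cdot a\cdot(-a^2/3)=-a^3$.
\item On the corner component we expand
\[
AB=a^2+az(U_0+V_0)+z^2U_0V_0,
\qquad
\E[U_0V_0]=\tfrac13-\tfrac12\,\E(U_0-V_0)^2=\tfrac13-\tfrac{q_\theta}2 .
\]
This gives $\E[AB]=a^2+az+z^2/3-z^2q_\theta/2$. The contribution is $3z\,\E[AB]=3a^2z+3az^2+z^3-\tfrac32z^3q_\theta$.
\end{itemize}
Because $3a^2z+3az^2+z^3=(a+z)^3-a^3=1-a^3$, the total is $1-2a^3-\tfrac32z^3q_\theta=P(\theta)$.

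Each step is routine bookkeeping once the triple is identified. The only point needing care is that the moments $m_\theta$ and $q_\theta$ of $\pi_\theta$ are exactly those in \eqref{eq:input-moments}, which Lemma~\ref{lem:input} supplies for both regimes $\theta\le1$ and $\theta>1$. We also need that $a_\theta\in(0,1)$, so that the mixture is well defined; this follows from Lemma~\ref{lem:parameter-checks}. Accordingly, the main (mild) obstacle is consistency of conventions rather than any genuine estimate. In particular, the rescaling $A=a+zU_0$ makes the distance moments scale by $z$ and $z^2$, which is what produces the powers $z^2$ and $z^3$ in \eqref{eq:param-boundary}.
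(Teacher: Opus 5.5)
Your proposal is correct and follows essentially the same route as the paper's proof. Both arguments split the signed-moment representation over the antidiagonal component and the corner component, use $\E\min(U_0,V_0)=(1-m_\theta)/2$ and $\E[U_0V_0]=\tfrac13-\tfrac{q_\theta}{2}$, simplify with $a+z=1$, and handle the endpoints via $C_0=W$ and $C_\infty=M$.
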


\begin{proof}
	Let $\theta>0$ be fixed, and abbreviate $a=a_\theta$, $z=z_\theta$.
	For $(U_0,V_0)\sim\pi_\theta$, the moment relations \eqref{eq:input-moments} imply
	\[
		\E\min(U_0,V_0)=\frac{1-m_\theta}{2},\qquad
		\E[U_0V_0]=\frac13-\frac{q_\theta}{2}.
	\]
	Along the antidiagonal component, $A=B$ is uniform on $[0,a]$ and $S=-1$. By \eqref{eq:signed-moments}, its contributions to $\gamma$ and $\rho$ are respectively $2a\cdot(-a/2)=-a^2$ and $3a\cdot(-a^2/3)=-a^3$. The corner squares contribute
	\[
	\begin{aligned}
		2z\,\E\min(a+zU_0,a+zV_0)&=2za+z^2(1-m_\theta),\\
		3z\,\E\bigl[(a+zU_0)(a+zV_0)\bigr]&=3za^2+3z^2a+z^3-\tfrac32z^3q_\theta.
	\end{aligned}
	\]
	Because $a+z=1$, the identities $2za+z^2=1-a^2$ and $3za^2+3z^2a+z^3=1-a^3$ reduce these expressions to \eqref{eq:param-boundary}. At the two endpoints, the result follows from $C_0=W$ and $C_\infty=M$.
\end{proof}

\begin{example}[Shuffles on the elementary arc]\label{ex:map}
	Given $0<d\le d_*$, define the measure-preserving map $T_d\colon\I\to\I$ by
	\[
		T_d(u)=\begin{cases}
			u+d/2,&0\le u<d/2,\\
			u-d/2,&d/2\le u<d,\\
			1-u,&d\le u\le1-d,\\
			u+d/2,&1-d<u<1-d/2,\\
			u-d/2,&1-d/2\le u\le1,
		\end{cases}
	\]
	and denote by $C^{T_d}$ the copula of $(U,T_d(U))$ for uniform $U$. This is a five-piece shuffle of $M$, as shown in Figure~\ref{fig:extremizers}(a),(b).
	For $0<\theta\le1$, Lemma~\ref{lem:input} chooses the half shift for $\pi_\theta$. Expanding Definition~\ref{def:extremizers} and using $z_\theta=2d$ gives $C_\theta=C^{T_d}$: the middle piece is the antidiagonal segment, whereas the half shift in each corner square becomes a pair of parallel diagonal pieces, each of length $d/2$.
	As $\theta$ ranges over $(0,1]$, $z_\theta$ grows continuously from $0$ to $2d_*$, so all $d\in(0,d_*]$ arise.
	Finally, Proposition~\ref{prop:moments} together with \eqref{eq:d-arc} gives $\gamma(C^{T_d})=-1+8d-10d^2$ and $\rho(C^{T_d})=-1+12d-24d^2+13d^3$. Direct integration yields the same formulas.
\end{example}

\section{The glued dual potential and proof of exactness}\label{sec:proof}

For every supporting parameter $t_\theta$, we build a dual potential
that remains feasible throughout the magnitude square. Its equality set
contains the support of $C_\theta$, which produces the sharp support bounds.

\subsection{Gluing the potentials}\label{sec:gluing}

Take $\theta>0$ and abbreviate
$a=a_\theta$, $z=z_\theta$, $t=t_\theta$ and $h=h_\theta$.
The two magnitude blocks from Definition~\ref{def:extremizers} naturally
lead to two formulas for the potential. We obtain each formula from
contact, join them at $a$, and then establish feasibility on the full square.

Along the antidiagonal component, $A=B=x$ and $x\le a<t$. Hence contact requires $2f(x)=c_t(x,x)=x(t-x)$.
Accordingly, the potential on $[0,a]$ must be the quadratic $x(t-x)/2$.

In a corner block, the magnitudes take the form $x=a+zu$ and $y=a+zv$, where $(u,v)\sim\pi_\theta$. The relevant cost there is $xy-t\min(x,y)$. It coincides with $c_t(x,y)$ provided that $\max(x,y)\ge t$, a property checked in the proof of Theorem~\ref{thm:support}.
To connect this cost with \eqref{eq:input-dual}, suppose that $a\le x\le y\le1$ and set $u=(x-a)/z$ and $v=(y-a)/z$.
Using $2xy=x^2+y^2-(x-y)^2$, we find
\[
	xy-tx=\frac{x^2-tx}{2}+\frac{y^2-ty}{2}-\frac12\bigl\{(y-x)^2-t(y-x)\bigr\},
\]
Because $y-x=z(v-u)$ and $t=sz$, the braced expression is $z^2\{(u-v)^2-s|u-v|\}$, namely the rho--footrule cost after rescaling.
Inequality \eqref{eq:input-dual} bounds this quantity below by $z^2\{h(u)+h(v)\}$, and equality holds $\pi_\theta$-almost surely.
We are therefore led to set, for $x\in[a,1]$,
\begin{equation}\label{eq:upper-potential}
	f_+(x)=\frac{x^2-tx}{2}-\frac{z^2}{2}\,h\Bigl(\frac{x-a}{z}\Bigr),
\end{equation}
Indeed, for $a\le x\le y\le1$ and the preceding $u,v$,
\begin{equation}\label{eq:positive-branch}
	f_+(x)+f_+(y)-(xy-tx)=\frac{z^2}{2}\Bigl\{(u-v)^2-s|u-v|-h(u)-h(v)\Bigr\}\ge0,
\end{equation}
and equality occurs for $\pi_\theta$-almost every $(u,v)$.

The definition of $\alpha_\theta$ in \eqref{eq:split} ensures that the two formulas coincide at $x=a$.
In fact, $\alpha_\theta^2-\alpha_\theta=\theta^2c_\theta/2$, $a=\alpha_\theta t$, $z=\theta t$, and $h(0)=c_\theta$, and consequently
\begin{equation}\label{eq:potential-match}
	f_+(a)=\frac{t^2}{2}\Bigl(\alpha_\theta^2-\alpha_\theta-\theta^2c_\theta\Bigr)=-\frac{z^2c_\theta}{4}=\frac{a(t-a)}{2}>0,
\end{equation}
The final term is precisely the value at $a$ of the quadratic branch. Lemma~\ref{lem:parameter-checks} gives $c_\theta<0$, so this common value is positive.

We next show that $f_+$ does not decrease.
Since \eqref{eq:input-endpoint} gives $|h'|\le w_\theta$ almost everywhere, almost every $x\in[a,1]$ satisfies
\begin{equation}\label{eq:potential-monotone}
	f_+'(x)=x-\frac t2-\frac z2\,h'\Bigl(\frac{x-a}{z}\Bigr)\ge a-\frac{t+zw_\theta}{2}=t\Bigl(\alpha_\theta-\frac{1+\theta w_\theta}{2}\Bigr)\ge0,
\end{equation}
with the final inequality following from Lemma~\ref{lem:parameter-checks}.
Thus $f_+$ is nondecreasing on $[a,1]$ and, in view of \eqref{eq:potential-match}, positive throughout that interval.

Estimate \eqref{eq:positive-branch} applies in the upper block whenever
the larger magnitude is no smaller than $t$. Full feasibility additionally
requires the other branch of the absolute value and pairs lying on opposite
sides of the split. The lemma below covers each of the three possible
placements of the two magnitudes.

\begin{lemma}[Dual feasibility of the glued potential]\label{lem:extension}
	Define
	\begin{equation}\label{eq:glued-potential}
		f_\theta(x)=\begin{cases}x(t-x)/2,&0\le x\le a,\\ f_+(x),&a\le x\le1.\end{cases}
	\end{equation}
	The function $f_\theta$ is continuous and satisfies
	\begin{equation}\label{eq:global-dual}
		f_\theta(x)+f_\theta(y)\ge c_t(x,y)=\min(x,y)\,|\max(x,y)-t|\qquad\text{for all }(x,y)\in\I^2,
	\end{equation}
	Thus $f_\theta$ is dual feasible in \eqref{eq:dual} for $t=t_\theta$.
\end{lemma}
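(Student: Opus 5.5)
The plan is to check \eqref{eq:global-dual} directly by a case distinction on where $x$ and $y$ lie relative to $a$ and to $t$. Three facts do most of the work. First, the bounds $t/2\le a<t$ from \eqref{eq:split-bounds}. Second, $f_+$ is nondecreasing on $[a,1]$ with $f_+(a)=a(t-a)/2>0$, by \eqref{eq:potential-monotone} and \eqref{eq:potential-match}. Third, the block inequality \eqref{eq:positive-branch}, which holds for all $a\le x\le y\le 1$ regardless of whether $y\ge t$. Continuity of $f_\theta$ is immediate from \eqref{eq:potential-match}, since $h$ is Lipschitz. By symmetry of $c_t$ and of the constraint, I assume $x\le y$ throughout. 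Write $q(x)=x(t-x)/2$ for the lower branch.

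\emph{Case 1: $x\le y\le a$.} Since $a<t$, we have $c_t(x,y)=x(t-y)$. A direct expansion gives
\[
q(x)+q(y)-x(t-y)=\tfrac12(y-x)\bigl(t-(y-x)\bigr),
\]
which is nonnegative because $0\le y-x\le a<t$.

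\emph{Case 2: $a\le x\le y$.} There are two subcases.
\begin{itemize}
\item If $y\ge t$, then $c_t(x,y)=xy-tx$, and \eqref{eq:positive-branch} is exactly the required inequality.
\item If $y<t$, then $c_t(x,y)=x(t-y)\le x(t-x)$. The map $u\mapsto u(t-u)$ is decreasing on $[t/2,t]$, and $x\ge a\ge t/2$. Hence $x(t-x)\le a(t-a)=2f_+(a)\le f_+(x)+f_+(y)$, where the last step uses monotonicity of $f_+$.
\end{itemize}

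\emph{Case 3: $x\le a\le y$.} This mixed case is where I expect the real difficulty, since the coupling puts no mass there. Fix $y$ and regard
\[
\Phi(x)=q(x)+f_+(y)-c_t(x,y),\qquad x\in[0,a].
\]
In both subcases $y<t$ and $y\ge t$, the cost $c_t(x,y)=x|y-t|$ is linear in $x$. Since $q$ is concave, $\Phi$ is concave on $[0,a]$, so it suffices to check the two endpoints.
\begin{itemize}
\item At $x=0$, we get $\Phi(0)=f_+(y)\ge f_+(a)>0$.
\item At $x=a$, we get $\Phi(a)=f_+(a)+f_+(y)-c_t(a,y)$. This is exactly the configuration already handled in Case 2, with the pair $(a,y)$ both in $[a,1]$. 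So $\Phi(a)\ge 0$: through \eqref{eq:positive-branch} when $y\ge t$, and through the monotonicity argument when $y<t$.
\end{itemize}
Concavity then gives $\Phi\ge0$ on all of $[0,a]$, which finishes the proof.

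Before relying on it, I would double-check the reduction in Case 3 to the endpoint $x=a$. It needs three things: that $q$ and $f_+$ agree at $a$, which is \eqref{eq:potential-match}; that \eqref{eq:positive-branch} was derived for arbitrary $a\le x\le y$ and not only on the support; and that the lower bound $a\ge t/2$ is what makes the decreasing-parabola step in Case 2 valid. The lower bound $\alpha_\theta\ge(1+\theta w_\theta)/2$ from Lemma~\ref{lem:parameter-checks} enters only through the monotonicity of $f_+$.
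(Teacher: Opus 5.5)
Your proposal is correct and follows the paper's proof essentially step for step. It uses the same three-way split relative to $a$ and the same identity $\tfrac12(y-x)\{t-(y-x)\}$ in the lower block, then \eqref{eq:positive-branch} together with the decreasing-parabola and monotonicity argument in the upper block, and finally the same concavity reduction to the endpoints $x=0$ and $x=a$ in the mixed case.
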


\begin{proof}
	Equation \eqref{eq:potential-match} proves continuity at $a$.
	The two sides of \eqref{eq:global-dual} are symmetric in $(x,y)$. We may therefore take $x\le y$, so that $c_t(x,y)=x\,|y-t|$, and separate the argument according to how $x$ and $y$ lie with respect to $a$.
	\begin{enumerate}[(i)]
		\item\emph{$y\le a$.}
		Here both magnitudes belong to the quadratic branch. Since \eqref{eq:split-bounds} yields $y\le a<t$, we have $c_t(x,y)=x(t-y)$.
		Calculation then gives
		\[
			f_\theta(x)+f_\theta(y)-x(t-y)=\tfrac12(y-x)\bigl\{t-(y-x)\bigr\},
		\]
		This is nonnegative because $0\le y-x\le a<t$.
		\item\emph{$a\le x\le y$.}
		Both magnitudes now fall in the upper branch, for which \eqref{eq:positive-branch} yields $f_+(x)+f_+(y)\ge xy-tx$.
		When $y\ge t$, the identity $c_t(x,y)=xy-tx$ proves the claim.
		When $y<t$, instead $c_t(x,y)=x(t-y)$, and comparison with the joining value at $a$ gives
		\[
			c_t(x,y)=x(t-y)\le x(t-x)\le a(t-a)=2f_+(a)\le f_+(x)+f_+(y).
		\]
		The first inequality follows from $y\ge x$. For the second, observe that $x\mapsto x(t-x)$ decreases on $[t/2,t]$, an interval containing $a$ and $x$ by \eqref{eq:split-bounds}. Equation \eqref{eq:potential-match} supplies the equality, and monotonicity of $f_+$ gives the final inequality.
		\item\emph{$x\le a\le y$.}
		With $y$ fixed, define
		\[
			\psi(x)=\frac{x(t-x)}{2}+f_+(y)-x\,|y-t|,\qquad0\le x\le a,
		\]
		On this interval, $\psi(x)$ equals $f_\theta(x)+f_\theta(y)-c_t(x,y)$.
		Concavity of $\psi$ reduces the verification to its endpoints.
		At the left endpoint, $\psi(0)=f_+(y)\ge0$.
		At the right endpoint, \eqref{eq:potential-match} gives $\psi(a)=f_+(a)+f_+(y)-a\,|y-t|$, and case~(ii), applied to $(a,y)$, shows that this value is nonnegative.
		It follows that $\psi\ge0$ throughout $[0,a]$.\qedhere
	\end{enumerate}
\end{proof}

\begin{theorem}[Sharp supporting inequalities]\label{thm:support}
	Let $\theta>0$ and let $\mu_\theta\in\couplings$ be the law of the magnitudes $(A,B)$ in Definition~\ref{def:extremizers}.
	The measure $\mu_\theta$ is supported by the contact set $\Gamma_{f_\theta}$. Consequently, $\mu_\theta$ solves \eqref{eq:primal} and $f_\theta$ solves \eqref{eq:dual} at $t=t_\theta$, and
	\begin{equation}\label{eq:value}
		\mathsf V(t_\theta)=\mathsf D(t_\theta)=2\int_0^1f_\theta(x)\de x=\tfrac13\bigl\{P(\theta)-\tfrac32t_\theta\,G(\theta)\bigr\}.
	\end{equation}
	It follows that every $C\in\CC$ obeys
	\begin{equation}\label{eq:sharp-support}
		\rho(C)-\tfrac32t_\theta\,\gamma(C)\le P(\theta)-\tfrac32t_\theta\,G(\theta),
	\end{equation}
	Equality is attained by $C=C_\theta$.
\end{theorem}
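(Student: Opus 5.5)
The plan is to check directly that $\mu_\theta$ is carried by the contact set $\Gamma_{f_\theta}$. Then Lemma~\ref{lem:extension} (feasibility) and Lemma~\ref{lem:duality}(ii) give optimality of both $\mu_\theta$ and $f_\theta$ together with $\mathsf V(t_\theta)=\mathsf D(t_\theta)=2\int_0^1f_\theta$. Since $\mu_\theta$ is a mixture of two components, contact is verified on each component separately.

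\emph{Antidiagonal component.} Here $A=B=x\in[0,a]$, and \eqref{eq:split-bounds} gives $x\le a<t$. Hence $c_t(x,x)=x(t-x)=2f_\theta(x)$, so contact holds trivially.

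\emph{Corner component.} Here $x=a+zU_0$ and $y=a+zV_0$ with $(U_0,V_0)\sim\pi_\theta$. By \eqref{eq:positive-branch}, $f_+(x)+f_+(y)=xy-t\min(x,y)$ holds $\pi_\theta$-almost surely. It remains to show that this expression equals $c_t(x,y)$, which requires $\max(x,y)\ge t$ almost surely; this is the step I expect to be the main obstacle. If instead $x\le y<t$, then case~(ii) of Lemma~\ref{lem:extension} gives
\[
f_+(x)+f_+(y)\ge a(t-a)\ge x(t-y)=c_t(x,y)>xy-tx,
\]
because $xy-tx<0<x(t-y)$. This contradicts equality, so contact forces $y\ge t$, except possibly when $x(t-y)=0$, which cannot happen since $x\ge a>0$ and $y<t$. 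More precisely, if contact in \eqref{eq:positive-branch} held with $y<t$, we would get $c_t(x,y)\le f_+(x)+f_+(y)=xy-tx<0\le c_t(x,y)$, which is impossible. So $\max(x,y)\ge t$ almost surely and the corner mass lies in $\Gamma_{f_\theta}$. (As a sanity check, this matches the fact that $\pi_\theta$ puts distances $|U_0-V_0|\ge \ell+\min(\delta,0)>0$ and $t=sz$.) This proves the support claim.

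\emph{The value identity.} By Lemma~\ref{lem:duality}(ii), the primal value equals $\int c_t\,d\mu_\theta$. On the support of $\mu_\theta$, $c_t(A,B)$ coincides with $S\min(A,B)\{\max(A,B)-t\}$, where $S$ is the sign from Definition~\ref{def:extremizers}: on the antidiagonal $S=-1$ and $\max<t$, and on the corners $S=1$ and $\max\ge t$. So \eqref{eq:signed-objective} holds with equality for $C_\theta$, giving $3\mathsf V(t_\theta)=\rho(C_\theta)-\tfrac32t_\theta\gamma(C_\theta)$. Proposition~\ref{prop:moments} then yields \eqref{eq:value}.

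\emph{The inequality.} Finally, \eqref{eq:sharp-support} follows from \eqref{eq:support-function}, or directly from \eqref{eq:signed-objective} combined with weak duality: every $C$ satisfies $\rho-\tfrac32t\gamma\le 3\E c_t(A,B)\le 3\mathsf V(t_\theta)$, with equality for $C_\theta$.
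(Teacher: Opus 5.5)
Your proposal is correct and follows the paper's proof essentially step by step. It verifies contact on the antidiagonal and corner components, invokes Lemma~\ref{lem:duality}(ii) for optimality, checks the sign $S=\sgn(\max(A,B)-t)$ and combines it with Proposition~\ref{prop:moments} for the value, and uses \eqref{eq:support-function} for the inequality. The only minor deviation is that you obtain $\max(A,B)\ge t$ on the corners by setting the contact equality against feasibility and $c_t\ge0$. The paper instead uses positivity of $f_+$ on $[a,1]$, from \eqref{eq:potential-monotone} and \eqref{eq:potential-match}, to get $\max(A,B)>t$. Both arguments work.
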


\begin{proof}
	Lemma~\ref{lem:extension} supplies dual feasibility of $f_\theta$.
	For equality in \eqref{eq:sharp-support}, we need both contact in the magnitude problem and the optimizing sign from \eqref{eq:signed-objective}. We check these properties on each component of the coupling.

	\emph{Contact on the antidiagonal part.}
	On this component, $A=B\le a<t$. Hence $2f_\theta(A)=A(t-A)=c_t(A,A)$, and therefore $(A,A)\in\Gamma_{f_\theta}$.

	\emph{Contact on the corner part.}
	On a corner component, $(A,B)=(a+zU_0,a+zV_0)$ for $(U_0,V_0)\sim\pi_\theta$, and equality holds in \eqref{eq:positive-branch}. Thus
	\[
		f_\theta(A)+f_\theta(B)=AB-t\min(A,B)=\min(A,B)\{\max(A,B)-t\}.
	\]
	Equations \eqref{eq:potential-monotone} and \eqref{eq:potential-match} show that $f_+$ is positive on $[a,1]$, so the left-hand side is positive.
	Because $\min(A,B)\ge a>0$, we must have $\max(A,B)>t$. The right-hand side is therefore $c_t(A,B)$, and $(A,B)\in\Gamma_{f_\theta}$.
	Combined with antidiagonal contact, this proves $\mu_\theta(\Gamma_{f_\theta})=1$. Lemma~\ref{lem:duality}(ii) now establishes optimality of $\mu_\theta$ and $f_\theta$, together with the first two identities in \eqref{eq:value}.

	\emph{Optimal sign.}
	Definition~\ref{def:extremizers} assigns $S=-1$ to the antidiagonal component, where $\max(A,B)<t$, and $S=1$ to the corner component, where we have just proved $\max(A,B)>t$.
	Consequently $S=\sgn(\max(A,B)-t)$ almost surely, which makes \eqref{eq:signed-objective} an equality for $C_\theta$.

	\emph{The value.}
	Equality in \eqref{eq:signed-objective}, together with Proposition~\ref{prop:moments}, gives
	\[
		3\int_{\I^2}c_t(x,y)\de\mu_\theta(x,y)=\rho(C_\theta)-\tfrac32t\,\gamma(C_\theta)=P(\theta)-\tfrac32t\,G(\theta),
	\]
	which proves the last identity in \eqref{eq:value}.
	Combining \eqref{eq:support-function} with \eqref{eq:value} now yields \eqref{eq:sharp-support}. Equality for $C=C_\theta$ was established above.
\end{proof}

\subsection{Proof of Theorem~\ref{thm:main}}\label{sec:completeness}

\begin{proof}[Proof of Theorem~\ref{thm:main}]
	\emph{The maximum \eqref{eq:maximum}.}
	Suppose that $0<\theta<\infty$ and $C\in\CC$ has $\gamma(C)=G(\theta)$.
	The gamma terms cancel in \eqref{eq:sharp-support}, leaving $\rho(C)\le P(\theta)$. Proposition~\ref{prop:moments} shows that $C_\theta$ realizes this bound at the required gamma value.

	We still have to consider $\theta\in\{0,\infty\}$. At these endpoints, the corresponding sections of $\Om$ each contain a single point.
	Indeed, the bounds $C(u,u)\le u$ and $C(u,1-u)\le\min(u,1-u)$ imply $\gamma(C)\le1$ through \eqref{eq:defs}. Equality requires $C(u,u)=u$ for almost every $u\in\I$, and hence for every $u$ by continuity.
	Under this condition,
	\[
		\PP(U\le u<V)=u-C(u,u)=0\qquad\text{for all }u\in\I,
	\]
	whence $\PP(U<V)=0$. By symmetry, $\PP(U>V)=0$ as well, and therefore $C=M$.
	We conclude that $\gamma(C)=1$ exactly when $C=M$. The reflection argument given below likewise shows that $\gamma(C)=-1$ exactly when $C=W$.
	This establishes \eqref{eq:maximum} also at $\theta\in\{0,\infty\}$.

	\emph{Continuity and surjectivity.}
	On each parameter interval, continuity follows from \eqref{eq:small-parameters}--\eqref{eq:param-boundary}. Lemma~\ref{lem:parameter-checks} guarantees positivity of the radicand in \eqref{eq:split}.
	It remains to verify compatibility at the three types of junction.
	When $\theta=N$, both neighboring formulas have $\ell=1/(2N)$ and $\delta=0$, so their values coincide.
	When $\theta=1/(L+R)$, either choice of $\ell$ yields $p=0$ and the same quantities
	\[
		m_\theta=\frac{L+R}{2},\qquad c_\theta=-\frac{LR}{2},\qquad q_\theta=LR+\tfrac23N(N+1)(L-R)^3.
	\]
	Finally, at $\theta=1$, the piecewise formulas with $N=1$, $\ell=L=\tfrac12$, and $\delta=0$ yield
	\[
		m_1=\tfrac12,\qquad q_1=\tfrac14,\qquad c_1=-\tfrac18,
	\]
	which matches \eqref{eq:large-parameters}.

	As $\theta\downarrow0$, relation \eqref{eq:large-parameters} implies $\theta^2c_\theta\to0$, and therefore $\alpha_\theta\to1$, $a_\theta\to1$, and $z_\theta\to0$.
	As $\theta\to\infty$, both $m_\theta$ and $q_\theta$ vanish, whereas Lemma~\ref{lem:parameter-checks} keeps $\alpha_\theta$ bounded. It follows that $a_\theta\to0$ and $z_\theta\to1$.
	We thus obtain $(G(0),P(0))=(-1,-1)$ and $(G(\infty),P(\infty))=(1,1)$, together with continuity on $[0,\infty]$.
	The intermediate value theorem then shows that $G([0,\infty])=[-1,1]$.
	If two parameter values produce the same $G$, equation \eqref{eq:maximum} identifies both corresponding $P$ values with the maximum of $\rho$ on one and the same section of $\Om$. Hence $\rhoup$ is well defined.

	\emph{The region.}
	Given $(U,V)\sim C$, let $C^-(u,v)=u-C(u,1-v)$ denote the copula of $(U,1-V)$. Then
	\[
		\rho(C^-)=12\,\E[U(1-V)]-3=-\rho(C),\qquad\gamma(C^-)=-\gamma(C),
	\]
	The second equality follows from \eqref{eq:rank-moments}, because $U+(1-V)-1=U-V$ and $U-(1-V)=U+V-1$.
	Thus $\Om$ is centrally symmetric, and its lower boundary equals $-\rhoup(-g)$.
	Since both functionals in \eqref{eq:defs} depend affinely on $C$, mixtures of the two extremizers at fixed $g$ attain every value between the upper and lower boundaries.
	This proves \eqref{eq:exact-region}.

	Under uniform convergence, $\CC$ is compact by \citet[Sec.~1.7]{durante2015principles}, while \eqref{eq:defs} is continuous and affine in $C$. Therefore $\Om$ is both compact and convex.
	Convexity of $\Om$ makes $\rhoup$ concave, which implies continuity on $(-1,1)$.
	Continuity also holds at the endpoints: by compactness and the singleton sections at $g=\pm1$, every accumulation point of $(\rhoup(g_n),g_n)$ for $g_n\to\pm1$ belongs to the relevant endpoint section.

	It remains to prove strict monotonicity of $\rhoup$.
	Choose $-1\le g_1<g_2\le1$, and let $C_1$ attain the boundary point $(\rhoup(g_1),g_1)$.
	We have $\rho(C_1)<1$: by \eqref{eq:rank-moments}, the equality $\rho(C)=1$ implies $U=V$ almost surely, hence $C=M$, whereas $\gamma(M)=1>g_1$.
	Set $\eta=(g_2-g_1)/(1-g_1)\in(0,1]$ and $C_\eta=(1-\eta)C_1+\eta M$. Then
	\[
		\gamma(C_\eta)=(1-\eta)g_1+\eta=g_2,\qquad
		\rho(C_\eta)=(1-\eta)\rhoup(g_1)+\eta>\rhoup(g_1),
	\]
	which forces $\rhoup(g_2)>\rhoup(g_1)$.
\end{proof}

\subsection{Proofs of the consequences}\label{sec:consequence-proofs}

\begin{proof}[Proof of Proposition~\ref{prop:discrepancy}]
	In \eqref{eq:sharp-support}, the objective $\rho-\gamma$ corresponds to the slope $\tfrac32t=1$, or equivalently $t=\tfrac23$.
	We begin by locating the elementary-arc parameter $\theta_0$ for which $t_{\theta_0}=\tfrac23$.
	Put $\theta_0=3d_0\in(0,1)$ and $\alpha=\tfrac32-3d_0$.
	Equation \eqref{eq:discrepancy-constant} gives $39d_0^2-28d_0+4=0$, from which direct calculation yields
	\[
		\alpha^2-\alpha=\frac{3\theta_0^2}{16}-\frac{\theta_0}{4}=\frac{\theta_0^2c_{\theta_0}}{2}\qquad\text{and}\qquad\alpha>\frac12.
	\]
	Since $\alpha>\tfrac12$, equation \eqref{eq:split} identifies it with $\alpha_{\theta_0}$. Consequently $t_{\theta_0}=1/(\theta_0+\alpha)=\tfrac23$ and $z_{\theta_0}=\theta_0t_{\theta_0}=2d_0$.

	Applying Theorem~\ref{thm:support} at $\theta=\theta_0$ gives $\rho(C)-\gamma(C)\le P(\theta_0)-G(\theta_0)$ for every $C\in\CC$. Example~\ref{ex:map} shows that equality is attained by $C_{\theta_0}=C^{T_{d_0}}$.
	Formula \eqref{eq:d-arc} gives
	\[
		P(\theta_0)-G(\theta_0)=4d_0-14d_0^2+13d_0^3=\Delta_0,
	\]
	while substitution of $d=d_0$ in \eqref{eq:d-arc} supplies the maximizer's coordinates.
	Reflecting the copula gives the identical bound for $\gamma-\rho$, completing the proof of \eqref{eq:discrepancy}.
\end{proof}

\begin{proof}[Proof of Corollary~\ref{cor:signs}]
	Equation \eqref{eq:exact-region} identifies the section of $\Om$ at $\gamma=0$ as $[-\rhoup(0),\rhoup(0)]$. Since \eqref{eq:elementary} gives $\rhoup(0)=r_0$, the first identity in \eqref{eq:zero-sections} follows.

	For the second identity, strict increase of $\rhoup$, together with $\rhoup(-1)=-1<0<\rhoup(0)$, gives a unique zero $g_z\in(-1,0)$. The section at $\rho=0$ consists of those $g$ satisfying $-\rhoup(-g)\le0\le\rhoup(g)$ and is therefore $[g_z,-g_z]$.
	Along the elementary arc \eqref{eq:d-arc}, the condition $r=0$ becomes
	\[
		13d^3-24d^2+12d-1=(d-1)(13d^2-11d+1)=0,
	\]
	Its unique root in $[0,d_*]$ is
	\[
		d_1=\frac{11-\sqrt{69}}{26}=0.1035\ldots,\qquad\text{with}\qquad -1+8d_1-10d_1^2=-g_0
	\]
	where $g_0$ is the constant in \eqref{eq:thresholds}. Thus $g_z=-g_0$, proving the second identity in \eqref{eq:zero-sections}.
	Monotonicity of $\rhoup$ gives the stated implications, and attainment of $(\pm r_0,0)$ and $(0,\pm g_0)$ shows that both thresholds are sharp.
\end{proof}

\bibliographystyle{plainnat}
\bibliography{\rhogammaroot references}

\begin{thebibliography}{14}
\providecommand{\natexlab}[1]{#1}
\providecommand{\url}[1]{\texttt{#1}}
\expandafter\ifx\csname urlstyle\endcsname\relax
  \providecommand{\doi}[1]{doi: #1}\else
  \providecommand{\doi}{doi: \begingroup \urlstyle{rm}\Url}\fi

\bibitem[Ansari and Rockel(2026)]{ansari2026exact}
Jonathan Ansari and Marcus Rockel.
\newblock The exact {Spearman} rho--footrule region via optimal transport with
  applications to finite rankings, mixability, and {Chatterjee}'s rank
  correlation.
\newblock Preprint,
  \href{https://arxiv.org/abs/2608.20176v1}{arXiv:2608.20176v1}, 2026.

\bibitem[Durante and Sempi(2015)]{durante2015principles}
Fabrizio Durante and Carlo Sempi.
\newblock \emph{Principles of Copula Theory}.
\newblock Chapman and Hall/CRC, Boca Raton, 2015.
\newblock \doi{10.1201/b18674}.

\bibitem[Genest et~al.(2010)Genest, Ne{\v{s}}lehov{\'a}, and
  Ben~Ghorbal]{genest2010spearman}
Christian Genest, Johanna Ne{\v{s}}lehov{\'a}, and Noomen Ben~Ghorbal.
\newblock {Spearman}'s footrule and {Gini}'s gamma: a review with complements.
\newblock \emph{J. Nonparametric Stat.}, 22\penalty0 (8):\penalty0 937--954,
  2010.
\newblock \doi{10.1080/10485250903499667}.

\bibitem[Kokol~Bukov{\v{s}}ek and Moj{\v{s}}kerc(2022)]{bukovsek2022exact}
Damjana Kokol~Bukov{\v{s}}ek and Bla{\v{z}} Moj{\v{s}}kerc.
\newblock On the exact region determined by {Spearman}'s footrule and {Gini}'s
  gamma.
\newblock \emph{J. Comput. Appl. Math.}, 410:\penalty0 114212, 2022.
\newblock \doi{10.1016/j.cam.2022.114212}.

\bibitem[Kokol~Bukov{\v{s}}ek and Stopar(2023)]{bukovsek2023exact}
Damjana Kokol~Bukov{\v{s}}ek and Nik Stopar.
\newblock On the exact regions determined by {Kendall}'s tau and other
  concordance measures.
\newblock \emph{Mediterr. J. Math.}, 20\penalty0 (3):\penalty0 147, 2023.
\newblock \doi{10.1007/s00009-023-02350-0}.

\bibitem[Kokol~Bukov{\v{s}}ek et~al.(2021)Kokol~Bukov{\v{s}}ek, Ko{\v{s}}ir,
  Moj{\v{s}}kerc, and Omladi{\v{c}}]{bukovsek2021spearman}
Damjana Kokol~Bukov{\v{s}}ek, Toma{\v{z}} Ko{\v{s}}ir, Bla{\v{z}}
  Moj{\v{s}}kerc, and Matja{\v{z}} Omladi{\v{c}}.
\newblock {Spearman}'s footrule and {Gini}'s gamma: local bounds for bivariate
  copulas and the exact region with respect to {Blomqvist}'s beta.
\newblock \emph{J. Comput. Appl. Math.}, 390:\penalty0 113385, 2021.
\newblock \doi{10.1016/j.cam.2021.113385}.

\bibitem[Kokol~Bukov{\v{s}}ek et~al.(2026)Kokol~Bukov{\v{s}}ek, Lazi{\'c},
  Moj{\v{s}}kerc, and Stopar]{bukovsek2026exact}
Damjana Kokol~Bukov{\v{s}}ek, Petra Lazi{\'c}, Bla{\v{z}} Moj{\v{s}}kerc, and
  Nik Stopar.
\newblock The exact region determined by {Spearman}'s footrule, {Gini}'s gamma
  and {Kendall}'s tau.
\newblock \emph{J. Comput. Appl. Math.}, 486:\penalty0 117639, 2026.
\newblock \doi{10.1016/j.cam.2026.117639}.

\bibitem[Nelsen(1998)]{nelsen1998concordance}
Roger~B. Nelsen.
\newblock Concordance and {Gini}'s measure of association.
\newblock \emph{J. Nonparametric Stat.}, 9\penalty0 (3):\penalty0 227--238,
  1998.
\newblock \doi{10.1080/10485259808832744}.

\bibitem[Nelsen(2006)]{nelsen2006introduction}
Roger~B. Nelsen.
\newblock \emph{An Introduction to Copulas}.
\newblock Springer Series in Statistics. Springer, New York, second edition,
  2006.
\newblock \doi{10.1007/0-387-28678-0}.

\bibitem[Orenday~Lares and Rockel(2026)]{orendaylares2026exact}
Jacob~Israel Orenday~Lares and Marcus Rockel.
\newblock The exact region determined by {Kendall}'s tau, {Spearman}'s footrule
  and {Blomqvist}'s beta.
\newblock Preprint, \href{https://arxiv.org/abs/2607.12841}{arXiv:2607.12841},
  2026.

\bibitem[Scarsini(1984)]{scarsini1984measures}
Marco Scarsini.
\newblock On measures of concordance.
\newblock \emph{Stochastica}, 8\penalty0 (3):\penalty0 201--218, 1984.
\newblock URL \url{https://eudml.org/doc/38916}.

\bibitem[Schreyer et~al.(2017)Schreyer, Paulin, and
  Trutschnig]{schreyer2017exact}
Manuela Schreyer, Roland Paulin, and Wolfgang Trutschnig.
\newblock On the exact region determined by {Kendall}'s $\tau$ and {Spearman}'s
  $\rho$.
\newblock \emph{J. R. Stat. Soc., Ser. B, Stat. Methodol.}, 79\penalty0
  (2):\penalty0 613--633, 2017.
\newblock \doi{10.1111/rssb.12181}.

\bibitem[Villani(2009)]{villani2009optimal}
C{\'e}dric Villani.
\newblock \emph{Optimal Transport: Old and New}, volume 338 of
  \emph{Grundlehren der mathematischen Wissenschaften}.
\newblock Springer, Berlin, 2009.
\newblock \doi{10.1007/978-3-540-71050-9}.

\bibitem[Zaev(2015)]{zaev2015monge}
Danila~A. Zaev.
\newblock On the {Monge}--{Kantorovich} problem with additional linear
  constraints.
\newblock \emph{Math. Notes}, 98\penalty0 (5):\penalty0 725--741, 2015.
\newblock \doi{10.1134/S0001434615110036}.

\end{thebibliography}
\end{document}